\newtheorem{dfn}{Definition}[section]
\newtheorem{thm}[dfn]{Theorem}
\newtheorem{lem}[dfn]{Lemma}
\newtheorem{cor}[dfn]{Corollary}
\newtheorem{rem}[dfn]{Remark}
\newtheorem{prop}[dfn]{Proposition}\makeatletter
\renewenvironment{proof}[1][\proofname]{\par
  \normalfont
  \topsep6\p@\@plus6\p@ \trivlist
  \item[\hskip\labelsep{\bfseries #1}\@addpunct{\bfseries.}]\ignorespaces
}{%
  \endtrivlist
}
\renewcommand{\proofname}{Proof.}
\numberwithin{equation}{section}
\begin{document}

\title{Exponents for the number of pairs of $\alpha$-favorite points of a simple random walk in ${\mathbb Z}^2$}


\author{Izumi Okada}
\thanks{Address: Tokyo University of Science
Noda City, Chiba 278-8510, Japan.\\
           \quad E-mail: izumiokada1205@gmail.com\\
           \quad Short title: Favorite points of a simple random walk}
             \subjclass[2010]{60J25}
               \keywords{simple random walk, local time}



\date{}

\dedicatory{}

\begin{abstract}
We investigate a problem suggested by Dembo, Peres, Rosen, and Zeitouni, which states that the growth exponent of favorite points associated with a simple random walk in $\mathbb Z^2$ coincides, on average and almost surely, with those of late points and high points associated with the discrete Gaussian free field.
\end{abstract}

\maketitle

\section{Introduction, known results, and unsolved problems}
\quad In this paper, we study some properties of the local time of a simple random walk,  
thereby positively resolving a conjecture suggested in \cite{Dembo2}. 
Although this topic has been studied extensively, 
numerous open problems still exist. 
Approximately 60 years ago, 
Erd\H{o}s and Taylor \cite{er} proposed a problem concerning a simple random walk in $\mathbb{Z}^2$: 
How many times does a simple random walk  
$(S_k)_{k=1}^{\infty}$ revisit the most frequently visited site (up to a specific time)? 
The points that are most often visited in a simple random walk 
are called {\it{favorite points}}. 
In \cite{er}, it is shown that, for a simple random walk in ${\mathbb{Z}^2}$,
\begin{align*}
 \frac{1}{4\pi}
\le \liminf_{n \to \infty} \frac{\max_{x\in {\mathbb Z}^2}K(n,x)}{(\log n)^2}
\le \limsup_{n \to \infty} \frac{\max_{x\in {\mathbb Z}^2}K(n,x)}{(\log n)^2}
\le \frac{1}{\pi} \quad \text{a.s.,} 
\end{align*}
where $K(n,x)$ is the number of times the simple random walk visits $x$ up to time $n$, 
that is, $K(n,x)=\sum_{i=1}^n1_{\{S_i=x\}}$.  
Erd\H{o}s and Taylor conjectured the existence and value of this limit. 
This problem was solved by Dembo et al. \cite{Dembo} approximately 20 years ago.  
In fact, they showed that, for a simple random walk in ${\mathbb{Z}^2}$, 
\begin{align*}
\lim_{n \to \infty} \frac{\max_{x\in {\mathbb Z}^2}K(\tau_n,x)}{4(\log n)^2}
=\lim_{n \to \infty} \frac{\max_{x\in {\mathbb Z}^2}K(n,x)}{(\log n)^2}
= \frac{1}{\pi} \quad \text{a.s,} 
\end{align*}
where $\tau_n$ is the first time that the simple random walk exits a disc around the origin of radius $n$.  
In addition, Dembo et al. \cite{Dembo} 
showed that
\begin{align}\label{fav1}
\frac{\log |\Psi_n(\alpha)|}{\log n}\to 2(1- \alpha) 
\quad \text{ as } n \to  \infty \quad \text{ a.s.,}
\end{align} 
where $\Psi_n (\alpha)$ is the set of $\alpha$-favorite points defined as
\begin{align*}
 \Psi_n(\alpha):=\bigg\{x\in D(0,n): K(\tau_n,x)\ge \bigg\lceil \frac{4\alpha}{\pi} (\log n)^2 \bigg\rceil \bigg\}
 \end{align*}
for $0<\alpha<1$, where $\lceil a \rceil$ is the smallest integer $n$ with $n \ge a$, 
 $D(x,r):=\{y\in \mathbb{Z}^2: d(x,y)\le r\}$,  and $d$ is the Euclidean distance.    
In other words, $\Psi_n (\alpha)$ is the set of points whose local time is  an $\alpha$-fraction of the favorite point. 
They derived these results using an innovative application of the second moment method.  
(Note that Rosen \cite{rosen} provided another proof of \cite{Dembo}.) 
In addition, Dembo et al. suggested additional open problems concerning favorite points. 

Recently, research on the geometric structure of the favorite points of a simple random walk in $\mathbb{Z}^d$ has progressed significantly. 
For example, in \cite{okada1}, it was shown that the favorite point of a simple random walk in $\mathbb{Z}^d$ with $d\ge 2$ almost surely does not appear in the inner boundary of the random walk range after a sufficiently large time.  
Lifshits and Shi \cite{shi2} verified that the favorite point of a one-dimensional simple random walk tends to be far from the origin, 
and the number of times  a simple random walk revisits the favorite point up to the cover time on a two-dimensional torus has been estimated in \cite{abe}. 
Several open problems concerning favorite points were raised by Erd\H{o}s and R\'ev\'esz \cite{er2, er3} and Shi and T\'oth \cite{shi}, but almost no  solutions have been derived for multi-dimensional walks.

In this paper, we focus on $\alpha$-favorite points. 
We obtain asymptotic formulas for 
\begin{align}\label{sss1}
|\{ (x,x')\in \Psi_n(\alpha)^2:d(x,x')\le n^{\beta} \}|
\end{align}
as $n\to \infty$, where $0< \alpha,\beta <1$ and $|A|$ denotes the cardinality of $A$. 
Asymptotic formulas such as this can help to determine the joint distribution of the long-time behavior of local time scales. 
In addition, we want to compare the long-time behavior of the local time of a random walk to 
the values of the discrete Gaussian free field (DGFF). 
This is inspired by the results presented in \cite{ol} and \cite{Dembo2}, which show 
that the asymptotic behavior of (\ref{sss1}) is similar to that of the number of $\alpha$-high points  of the DGFF in $V_n:=\{1,\ldots,n\}^2$ with zero boundary conditions and, for a simple random walk, the number of $\alpha$-late points in $\mathbb{Z}^2_n(=\mathbb{Z}^2/n\mathbb{Z}^2)$  for $0<\alpha<1$ ($\alpha$-high points and $\alpha$-late points are defined in Section \ref{fre}). 


\section{Framework and principal results}\label{fre}
For $0<\alpha, \beta<1$ and $\gamma \ge0$,  let
\begin{align*}
F_{h,\beta}(\gamma):&=\gamma^2(1-\beta)+\frac{h}{\beta}(1-\gamma(1-\beta))^2, \\
\Gamma_{\alpha,\beta}:&=\{\gamma\ge0: 2-2\beta- 2\alpha F_{0,\beta}(\gamma)\ge 0\}\\
&=\{\gamma\ge0: \alpha \gamma^2\le 1\}.
\end{align*}
The principal results of this paper are as follows. 
\begin{thm}\label{h1}
For any $0< \alpha,\beta <1$, 
\begin{align*}
\lim_{n\to \infty}\frac{\log 
|\{ (x,x')\in \Psi_n(\alpha)^2:d(x,x')\le n^{\beta} \}|}{\log n}
=\rho_2(\alpha, \beta) \quad \text{a.s.},
\end{align*}
where
\begin{align*}
\rho_2(\alpha, \beta):=
2+2\beta -2\alpha \inf_{\gamma\in \Gamma_{\alpha,\beta}}F_{2,\beta}(\gamma)
=
\begin{cases}
 2+2\beta-\frac{4\alpha}{2-\beta}&(\beta\le 2(1-\sqrt{\alpha})),
\\
8(1-\sqrt{\alpha})-4(1-\sqrt{\alpha})^2/\beta&(\beta\ge 2(1-\sqrt{\alpha})).
\end{cases}
\end{align*}
\end{thm}

\begin{thm}\label{h2}
For any $0< \alpha,\beta <1$,
\begin{align*}
\lim_{n\to \infty}\frac{\log 
E[|\{ (x,x')\in \Psi_n(\alpha)^2:d(x,x')\le n^{\beta} \}|]}{\log n}
=\hat{\rho}_2(\alpha, \beta),
\end{align*}
where
\begin{align*}
\hat{\rho}_2(\alpha, \beta):=
\sup_{\beta'\le \beta}\sup_{\gamma \ge0}\{2+2\beta' -2\alpha F_{2,\beta'}(\gamma)\}
=
\begin{cases}
2+2\beta-\frac{4\alpha}{2-\beta}&(\beta\le 2-\sqrt{2\alpha}),
\\
 6-4\sqrt{2\alpha}&(\beta\ge 2-\sqrt{2\alpha}).
\end{cases}
\end{align*}
\end{thm}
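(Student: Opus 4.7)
The plan is to compute the expectation by linearity, decompose the sum by the dyadic scale of $d(x,x')$, and identify the dominant scale via a one-variable optimization. Writing $k_n:=\lceil 4\alpha(\log n)^2/\pi\rceil$, linearity gives
\[
E\bigl[|\{(x,x')\in\Psi_n(\alpha)^2 : d(x,x')\le n^\beta\}|\bigr]
=\sum_{\substack{x,x'\in D(0,n)\\ d(x,x')\le n^\beta}}
P\bigl(K(\tau_n,x)\ge k_n,\,K(\tau_n,x')\ge k_n\bigr).
\]
I would split the pairs into the diagonal $x=x'$, controlled by the one-point estimate $E[|\Psi_n(\alpha)|]=n^{2-2\alpha+o(1)}$ implicit in \cite{Dembo}, and dyadic scales $d(x,x')\asymp n^{\beta'}$ with $\beta'\in(0,\beta]$. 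The bulk configurations (where $|x|,|x'|$ and the distance from $\{x,x'\}$ to $\partial D(0,n)$ are of order $n$) dominate, producing $n^{2+2\beta'+o(1)}$ pairs at scale $\beta'$, and the central task is then the two-point probability.

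The key estimate I would target is
\[
P\bigl(K(\tau_n,x)\ge k_n,\,K(\tau_n,x')\ge k_n\bigr)=n^{-\frac{4\alpha}{2-\beta'}+o(1)}\qquad\bigl(d(x,x')\asymp n^{\beta'}\bigr),
\]
proved via the excursion decomposition around $\{x,x'\}$ that, I expect, drives the Markov chain rule advertised in Section~3.2. Using $G_n(x,x)\sim\frac{2}{\pi}\log n$ and $G_n(x,x')\sim\frac{2}{\pi}(1-\beta')\log n$ together with the strong Markov property gives
\[
P^x(T_{x'}<\tau_n)=(1-\beta')+o(1),\qquad P^x(\tau_n<T_x\wedge T_{x'})=\frac{\pi}{2(2-\beta')\log n}(1+o(1)),
\]
so the visits to $\{x,x'\}$ split into maximal phases of iid geometric length with mean $\mu=\frac{2\beta'(2-\beta')}{\pi}\log n$, after each of which the walk switches to the other point with probability $1-\beta'+o(1)$ and exits with probability $\beta'+o(1)$. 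Taking $J=c\log n$ phases at each site and setting $A:=\frac{2\alpha}{\beta'(2-\beta')}$ so that $k_n/\mu=A\log n$, the probability of having at least $2J$ phases is $n^{2c\log(1-\beta')+o(1)}$, while each of the two (independent, Gamma-type) geometric sums exceeding $k_n$ contributes a Cram\'er cost $n^{-(A-c+c\log(c/A))+o(1)}$. The joint exponent
\[
2c\log(1-\beta')-2\bigl(A-c+c\log(c/A)\bigr)
\]
is maximized at $c^*=A(1-\beta')$, yielding the optimal value $-2A\beta'=-\frac{4\alpha}{2-\beta'}$; the lower bound on the two-point probability follows by restricting to this prescribed number of phases, the upper bound by a union bound over $J$.

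Combining the two inputs, the scale-$\beta'$ contribution has exponent $f(\beta'):=2+2\beta'-\frac{4\alpha}{2-\beta'}$. Since $f'(\beta')=2-\frac{4\alpha}{(2-\beta')^2}$ vanishes only at $\beta^*:=2-\sqrt{2\alpha}$ and $f$ is concave, the maximum of $f$ on $[0,\beta]$ is $f(\beta)=2+2\beta-\frac{4\alpha}{2-\beta}$ when $\beta\le\beta^*$ and $f(\beta^*)=6-4\sqrt{2\alpha}$ when $\beta\ge\beta^*$, reproducing $\hat\rho_2(\alpha,\beta)$.

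The hardest part will be the two-point estimate, and in particular its lower bound. The Markov chain rule of Section~3.2 must control the joint law of the numbers of $x$-phases, $x'$-phases and their lengths with enough precision to push the Cram\'er optimization through uniformly in $\beta'\in(0,1)$; and non-bulk configurations (one or both of $x,x'$ close to $\partial D(0,n)$ or near $0$, where the Green's function asymptotics degenerate) must be shown to contribute only sub-polynomially, so as not to spoil the clean exponent $\hat\rho_2(\alpha,\beta)$.
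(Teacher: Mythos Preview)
Your outline is correct and matches the paper's: linearity, decomposition by the scale $\beta'=\log d(x,x')/\log n$, two-point estimate $P(x,x'\in\Psi_n(\alpha))=n^{-4\alpha/(2-\beta')+o(1)}$, then the elementary optimization of $f(\beta')=2+2\beta'-4\alpha/(2-\beta')$ over $[0,\beta]$. The difference is entirely in how the two-point probability is obtained, and the paper's route is considerably shorter than your Cram\'er optimization.

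For the \emph{upper} bound the paper does not decompose into phases at all. It simply observes that $\{x,x'\in\Psi_n(\alpha)\}\subset\{\,|\{l<\tau_n:S_l\in\{x,x'\}\}|\ge 2k_n\,\}$ and then peels off one visit at a time by the strong Markov property to get $\max_{y\in\{x,x'\}}P^y(T_{x,x'}<\tau_{2n})^{2k_n-1}$. Since $P^y(\tau_{2n}<T_{x,x'})=\frac{\pi(1+o(1))}{2(2-\beta')\log n}$ by your Green's function computation, the exponent $-4\alpha/(2-\beta')$ falls out immediately with no optimization over $J$. For the \emph{lower} bound the paper does use the Section~3.2 identity, which is exactly your phase decomposition with the summation index $i$ counting switches; but instead of Cram\'er it exploits the symmetry $b_{1,2}=b_{2,1}$, $b_{1,1}\approx b_{2,2}$ to write the sum over $i$ as $\sum_i a_i^2\ge(\max_i a_i)^2\ge k_n^{-2}(\sum_i a_i)^2=k_n^{-2}(b_{1,1}+b_{1,2})^{2k_n-2}=k_n^{-2}(1-b_{1,3})^{2k_n-2}$, again yielding the exponent directly. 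Your variational calculation $c^*=A(1-\beta')$ giving $-2A\beta'=-4\alpha/(2-\beta')$ is correct and is in effect what this shortcut is computing implicitly, but the paper never needs to carry out the optimization.

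Your worry about non-bulk pairs is handled more crudely than you anticipate: for the lower bound the paper restricts to $x,x'\in D(0,n^{1-\epsilon})$ (losing only $n^{O(\epsilon)}$ pairs), and for the upper bound replaces $\tau_n$ by $\tau_{2n}$ to make the Green's function asymptotics uniform on all of $D(0,n)$. The very short scales $d(x,x')\le n^{\beta(1-\alpha)}$ are absorbed into the one-point bound $E|\Psi_n(\alpha)|\le n^{2-2\alpha+o(1)}$ together with $\hat\rho_2(\alpha,\beta)\ge 2(1-\alpha)(1+\beta)$.
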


\begin{rem}
As $\beta \to 0$, we have
\begin{align*}
\frac{\log E|\Psi_n(\alpha)|}{\log n}\to 2(1- \alpha) \quad \text{ as } n \to  \infty.
\end{align*}
This limit is the same as in (\ref{fav1}).
\end{rem}

We now present an intuitive explanation of the roles played by the variables $h$ and $\gamma$.  
We will estimate the probability that specific points are $\alpha$-favorite points conditioned by crossing numbers around these points.  
Roughly speaking, we will show that the probability can be written as a functional of $F_{h,\beta}(\gamma)$. 
$\gamma$ is a crossing number parameter, where $\gamma^2(1-\beta)$ corresponds to the probability that a simple random walk crosses a particular circle, and 
$h(1-\gamma(1-\beta))^2/\beta$ corresponds to the  probability that $h$-tuple points are $\alpha$-favorite points conditioned by their crossing numbers. 
We will explain why the domain of a function $F_{2,\beta}(\gamma)$ to have  $\rho_2(\alpha, \beta)$ has to be restricted  in $\Gamma_{\alpha,\beta}$ 
after proving Proposition \ref{propq2} in Section \ref{low Th h1}. 
See Section $1.2$ in \cite{Dembo*} for more details concerning this difference. 

We now present the corresponding known results regarding special points of the DGFF in $V_n$ and a simple random walk in 
$\mathbb{Z}^2_n$. 
While not directly connected with $\alpha$-favorite points, the results are important. 
First, we consider $\alpha$-high points of the DGFF in $V_n$. 
Bolthausen et al. \cite{Bol} showed that in probability
\begin{align*}
\lim_{n \to \infty}\frac{ \max_{x \in V_n }\phi_n(x)}{\log n}=2\sqrt{\frac{2}{\pi}},
\end{align*}
where $\{\phi_n(x)\}_{x \in V_n}$ is the DGFF with zero boundary conditions. 
In what follows, for $0<\alpha<1$, we define the set of $\alpha$-high points of the DGFF as
\begin{align*}
\mathcal{V}_n(\alpha):=\bigg\{ x\in V_n :
\frac{\phi_n(x)^2}{2} \ge \bigg\lceil  \frac{4\alpha}{\pi} (\log n)^2  \bigg\rceil  \bigg\}.
\end{align*}
Daviaud \cite{ol} showed that if $\Psi_n(\alpha)$ in Theorems \ref{h1} and \ref{h2} is replaced by $\mathcal{V}_n(\alpha)$, 
the exponent is the same as that of Theorems \ref{h1} and \ref{h2}.  
In addition, there are some interesting results concerning the relation between the local time of a random walk and the DGFF. 
Eisenbaum et al. \cite{Eisen} reported a powerful equivalence law called the ``generalized second Ray-Knight theorem". 
The relations they obtained between the local time and the DGFF 
are vital in deriving the principal results presented in this paper.   
Ding et al. \cite{Ding2} reported a strong correlation between the expected maximum of the DGFF and the expected cover time (see also \cite{Ding1}). 

Next, we define the $\alpha$-late points of a simple random walk in $\mathbb{Z}^2_n$. 
For a simple random walk in ${\mathbb{Z}^2_n}$, Dembo et al. \cite{Dembo3} showed that,
in probability,
\begin{align*}
\lim_{n \to \infty} \frac{\max_{x\in {\mathbb Z}^2_n}T_x }{(n\log n)^2}= \frac{4}{\pi}, 
\end{align*}
where $T_x$ is the hitting time of $x$ for a simple random walk in $\mathbb{Z}^2_n$.  
Subsequently, for $0<\alpha<1$, we define the set of $\alpha$-late points in $\mathbb{Z}^2_n$ as 
\begin{align*}
\mathcal{L}_n(\alpha):=\bigg\{ x\in {\mathbb{Z}^2_n} :
\frac{T_x}{(n\log n)^2}\ge \frac{4\alpha}{\pi} \bigg\}.
\end{align*}
Brummelhuis and Hilhorst \cite{ho} obtained a result for $\mathcal{L}_n(\alpha)$ corresponding to Theorem \ref{h2}, 
and verified the same formula with $\mathcal{L}_n(\alpha)$ in place of $\Psi_n(\alpha)$. 
Similarly,  in \cite{Dembo2}, a result corresponding to Theorem \ref{h1} was obtained. 

Next, we explain our proof. 
The upper bound in Theorem \ref{h1} can be proved using the method developed in \cite{Dembo1, Dembo2}. 
In \cite{Dembo1}, an estimate is derived for the probability that two points  in a disk are covered by a random walk under a particular condition on the crossing number between two circles (see Figure 1). 
\begin{figure}[h]
\centering
\includegraphics[width=110mm]{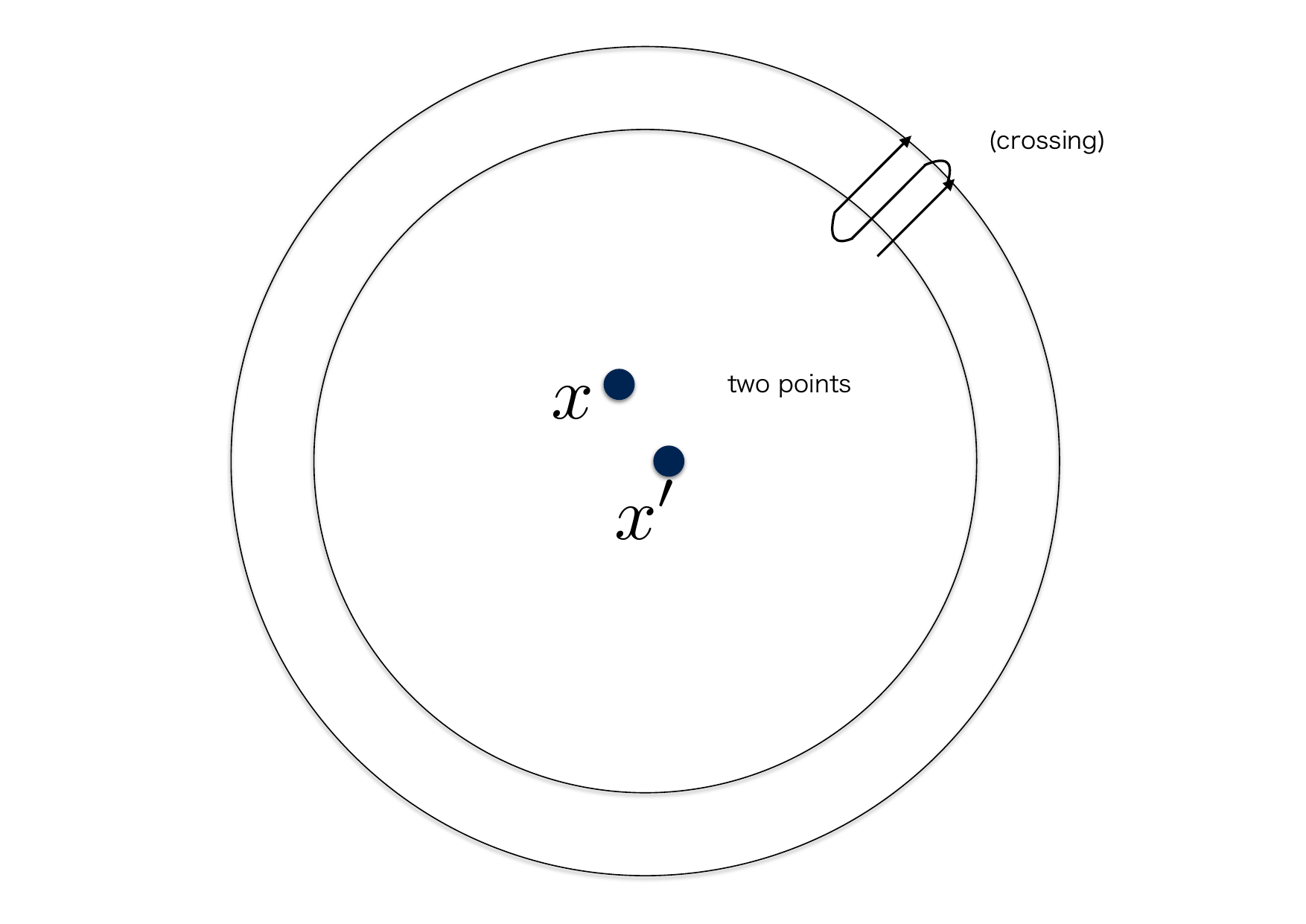}
\caption{}
\end{figure}
In \cite{Dembo2}, this method was used to study late points. 
In contrast to \cite{Dembo1, Dembo2}, Proposition \ref{uu0} provides an estimate for the probability that two points in a disk are  $\alpha$-favorite points under a particular condition on the crossing number around circles. 
The result of Proposition \ref{uu0} precisely corresponds to Lemma $6.1$ in \cite{Dembo2}. 
However, the proof of Proposition \ref{uu0} requires an original argument that is different from that of Lemma $6.1$ in \cite{Dembo2}. 

The proof of the lower bound in Theorem \ref{h1} relies on the second moment method developed in \cite{Dembo1, Dembo3, Dembo2}. 
In the proof of the lower bound in Theorem \ref{h1}, similar to \cite{Dembo1, Dembo3, Dembo2}, we determine whether a site in $\mathbb{Z}^2$ is ``successful" or ``qualified'' in terms of the crossing numbers around that site.  
However, our definitions of ``successful" and ``qualified'' are slightly different from those of Dembo et al.  
From the definition of these events, 
we can compute the number of sites that are successful (or qualified) and use the second moment method. 
This method produces a variety of limit theorems for functionals of the local time in two dimensions or similar models, as in \cite{Dembo1, Dembo3, Dembo2}. 
At this point, the methods and tools of Belius-Kistler  \cite{kistler} have been surpassed. 
A new multi-scale refinement of the second moment method is introduced. 
Roughly speaking, this provides a cleaner analogy between these kinds of problems and branching Brownian motion, and identifies a more natural set of scales (radii of the balls) and a more natural scaling for the excursion counts, showing that their square root can be thought of as a direct equivalent of the path of a  particle in branching Brownian motion. 

The remainder of this paper is organized as follows. 
In Section \ref{mar1}, we discuss joint distributions of the local time of a general Markov chain, 
before Section \ref{gre1} discusses some estimates concerning Green's function.  
In Section \ref{Th h2}, we present the proof of Theorem \ref{h2}. 
In the proof of Theorem \ref{h2}, we estimate the probability that two points whose distance is specified are $\alpha$-favorite points. 
To show the lower bound in Theorem \ref{h2}, 
we use a general Markov chain rule given in Section \ref{mar1}. 
In Section \ref{key*}, we discuss the key lemma for proving the upper bound in Theorem \ref{h1},  
and apply the results in Section \ref{mar1} 
to estimate the probability conditioned by crossing numbers. 
In Section \ref{upp Th h1}, with the aid of the lemma in Section \ref{key*}, 
we prove the upper bound in Theorem \ref{h1} using an argument similar to that in \cite{Dembo2}.  
In Section \ref{low Th h1}, we prove the lower bound in Theorem \ref{h1}.

 \section{Preliminary results}
 \subsection{Markov chain argument}\label{mar1}
In this section, we discuss some preliminary results regarding finite Markov chains, 
which are used when considering the transitions of a random walk in $\mathbb{Z}^2$ from one given set to another. 
We can reduce such a problem to that for a Markov chain by considering a sequence of hitting times.

Consider a general discrete-time Markov chain $(X_m)_{m=0}^{\infty}$ on a state space 
$\{1,2,\ldots,m_1+m_2+m_3\}$. 
Let $M_1:=\{1,\ldots,m_1\}$, $M_2:=\{m_1+1,\ldots,m_1+m_2\}$ 
and $M_3:=\{m_1+m_2+1,\ldots,m_1+m_2+m_3\}$. 
For any $A\subset \{1,2,\ldots,m_1+m_2+m_3\}$, let $T_A:=\inf \{m\ge1: X_m\in A\}$. 
Set 
\begin{align*}
g_{i,l}:=P^{i}(\min_{s\in \{1,2,3\}}T_{M_s}=T_{M_l})
\end{align*}
for $l \in \{1,2,3\}$ and $i\in \{1,2,\ldots,m_1+m_2+m_3\}$. 
In addition, for any $j$, $h\in \{1,2,3\}$, let 
\begin{align*}
d_{j,h}:=
\max_{i\in M_j } g_{i,h}.
\end{align*}
For $i \in \{1,2,\ldots,m_1+m_2+m_3\}$, let $P^i$ be the law of 
$(X_m)_{m=0}^{\infty}$ starting at $i$. 
Now, we extend the definition of the local time. 
We denote the number of visits to a particular set $A\subset \{1,2,\ldots,m_1+m_2+m_3\}$ up to time $n$ 
by $K(n, A)$, that is, $\sum_{i=1}^n 1_{\{S_i \in A \}}$. 
\begin{lem}\label{mma}
For any $n_1$, $n_2\in \mathbb{N}$ and $j\in M_1$, 
\begin{align}
\notag
&P^j\bigg(
K(n_1+n_2,M_1)=n_1,
K(n_1+n_2,M_2)=n_2,
X_{n_1+n_2}\in M_2 \bigg)\\
\label{we11}
&\le \sum_{0\le i\le n_1 \wedge (n_2-1)} 
d_{1,1}^{n_1-i}d_{1,2}^{i+1}
\begin{pmatrix}
n_1\\
i
\end{pmatrix}
d_{2,1}^{i}d_{2,2}^{n_2-i-1}
\begin{pmatrix}
n_2-1\\
i
\end{pmatrix}
\end{align}  
and for any $j\in M_2$,
\begin{align}\notag
&P^j\bigg( K(n_1+n_2,M_1)=n_1,
K(n_1+n_2,M_2)=n_2, 
X_{n_1+n_2}\in M_2 \bigg)\\
\label{we22}
&\le \sum_{1\le i\le n_1 \wedge n_2} 
d_{1,1}^{n_1-i}d_{1,2}^{i} 
\begin{pmatrix}
n_1\\
i-1
\end{pmatrix}
d_{2,1}^{i}d_{2,2}^{n_2-i}
\begin{pmatrix}
n_2\\
i
\end{pmatrix}
\end{align}
where $0!=1$. 
If $m_1=m_2=1$, then for any $n_1$, $n_2\in \mathbb{N}$, 
\begin{align}
\notag
&P^1\bigg(K(n_1+n_2,\{1\})=n_1,
K(n_1+n_2,\{2\})=n_2,
X_{n_1+n_2}=2\bigg)\\
\label{we1**}
&= \sum_{0\le i\le n_1 \wedge (n_2-1)} 
g_{1,1}^{n_1-i}g_{1,2}^{i+1}
\begin{pmatrix}
n_1\\
i
\end{pmatrix}
g_{2,1}^{i}g_{2,2}^{n_2-i-1}
\begin{pmatrix}
n_2-1\\
i
\end{pmatrix}
.
\end{align}  
\end{lem}
\begin{proof}
We show that (\ref{we11}) holds by induction on $n_1+n_2$. 
In the case of $n_1=n_2=1$, both (\ref{we11}) and (\ref{we22}) are immediate. 
Fix $n\ge 2$ and assume that (\ref{we11}) and (\ref{we22}) hold for any $n_1$, $n_2\in \mathbb{N}$ with $n_1+n_2=n$. 
We want to show  (\ref{we11})  for any $\tilde{n}_1$, $\tilde{n}_2\in \mathbb{N}$ with 
$\tilde{n}_1+\tilde{n}_2=n+1$. 
First, we discuss the case in which $\tilde{n}_2=1$, 
and then we discuss the case where $\tilde{n}_2\ge 2$. 
When $\tilde{n}_2=1$,  for any $j\in M_1$,
\begin{align*}
&P^j \bigg(K(\tilde{n}_1+\tilde{n}_2,M_1)=\tilde{n}_1,
K(\tilde{n}_1+\tilde{n}_2,M_2)=\tilde{n}_2,
X_{\tilde{n}_1+\tilde{n}_2}\in M_2 \bigg)\\
=&P^j \bigg(X_m\in M_1  \text{ for }1\le m\le \tilde{n}_1+\tilde{n}_2-1,
X_{\tilde{n}_1+\tilde{n}_2}\in M_2 \bigg)\\
\le&d_{1,1}^{\tilde{n}_1+\tilde{n}_2-1}d_{1,2},
\end{align*}
and so the claim holds. 
Next, we discuss the case $\tilde{n}_2 \ge 2$. 
The Markov property states that, for any $j\in M_1$, $\tilde{n}_2\ge2$, and $\tilde{n}_1\in \mathbb{N}$,
\begin{align*}
&P^j\bigg(K(\tilde{n}_1+\tilde{n}_2,M_1)=\tilde{n}_1,
K(\tilde{n}_1+\tilde{n}_2,M_2)=\tilde{n}_2,
X_{\tilde{n}_1+\tilde{n}_2}\in M_2 \bigg)\\
\le&d_{1,1}\max_{j\in M_1}
P^j\bigg(K(\tilde{n}_1+\tilde{n}_2-1,M_1)=\tilde{n}_1-1,
K(\tilde{n}_1+\tilde{n}_2-1,M_2)=\tilde{n}_2,
X_{\tilde{n}_1+\tilde{n}_2-1}\in M_2 \bigg)\\
&+d_{1,2}\max_{j\in M_2}
P^j\bigg(K(\tilde{n}_1+\tilde{n}_2-1,M_1)=\tilde{n}_1,
K(\tilde{n}_1+\tilde{n}_2-1,M_2)=\tilde{n}_2-1,
X_{\tilde{n}_1+\tilde{n}_2-1}\in M_2 \bigg)\\
\le&d_{1,1} \sum_{0\le i\le (\tilde{n}_1-1)\wedge(\tilde{n}_2-1)} 
d_{1,1}^{\tilde{n}_1-1-i}d_{1,2}^{i+1} 
\begin{pmatrix}
\tilde{n}_1-1\\
i
\end{pmatrix}
d_{2,1}^{i}d_{2,2}^{\tilde{n}_2-1-i}
\begin{pmatrix}
\tilde{n}_2-1\\
i
\end{pmatrix}
\\
&+ d_{1,2}\sum_{1\le i\le \tilde{n}_1\wedge (\tilde{n}_2-1)} 
d_{1,1}^{\tilde{n}_1-i}d_{1,2}^i 
\begin{pmatrix}
\tilde{n}_1-1\\
i-1
\end{pmatrix}
d_{2,1}^{i}d_{2,2}^{\tilde{n}_2-1-i} 
\begin{pmatrix}
\tilde{n}_2-1\\
i
\end{pmatrix}
\\
=&\sum_{0\le i\le \tilde{n}_1 \wedge (\tilde{n}_2-1)} 
d_{1,1}^{\tilde{n}_1-i}d_{1,2}^{i+1}
\begin{pmatrix}
\tilde{n}_1\\
i
\end{pmatrix}
d_{2,1}^{i}d_{2,2}^{\tilde{n}_2-i-1} 
\begin{pmatrix}
\tilde{n}_2-1\\
i
\end{pmatrix},
\end{align*}
where the second inequality comes from the inductive assumptions (\ref{we11}) and (\ref{we22}). 
This completes the proof of (\ref{we11}). 
As we can easily obtain (\ref{we22}) and (\ref{we1**}) using the same argument as for (\ref{we11}), we omit the proof. 
\end{proof}
To formulate additional statements, 
we present the extensional definition of the local time. 
We denote the number of visits to a particular edge of the vertices $A_1$ and $A_2 \subset \{1,2,\ldots,m_1+m_2+m_3\}$ up to time $n$ 
by $\tilde{K}(n, A_1,A_2)$, that is, $\sum_{i=0}^{n-1} 1_{\{S_i \in A_1, S_{i+1}\in A_2 \}}$. 
Let $\tilde{M}_2:=\{m_1+m_2+m_3+1,\ldots,m_1+m_2+m_3+\tilde{m}_2 \}$ 
and consider a Markov chain $(X_m)_{m=0}^{\infty}$ on a state space 
$\{1,2,\ldots,m_1+m_2+m_3+\tilde{m}_2 \}$. 
Assume that $P^j(T_{\tilde{M}_2}<T_{M_1})=1$ for any $j\in M_2$ and 
$P^j(T_{\tilde{M}_2}<T_{M_2}<T_{M_3})=1$ for any $j\in M_1$. 
Let $\tilde{T}_{M_2}:=\inf \{m\ge T_{\tilde{M}_2}:  S_m\in M_2  \}$ and
\begin{align*}
\tilde{d}_{1,1}
:=& \max_{j\in M_1} P^j(T_{M_1} <T_{M_2}),\\
\tilde{d}_{1,2}
:=& \max_{j\in M_1} P^j(T_{M_2} <T_{M_1}),\\
\tilde{d}_{2,1}
:=& \max_{j\in M_2} P^j(T_{\tilde{M}_2}<  T_{M_3},  T_{M_1} <\tilde{T}_{M_2} ),\\
\tilde{d}_{2,2}
:=& \max_{j\in M_2} P^j(T_{\tilde{M}_2}<  T_{M_3}, \tilde{T}_{M_2} <T_{M_1}).
\end{align*}
\begin{lem}
For any $n_1$, $n_2\in \mathbb{N}$ and for any $j\in M_1$,
\begin{align}\notag
&P^j\bigg( K(T_{M_3},M_1)=n_1,
\tilde{K}(T_{M_3},M_2,\tilde{M}_2)=n_2 \bigg)\\
\label{we1}
&\le \sum_{0\le i\le n_1 \wedge n_2} 
\tilde{d}_{1,1}^{n_1-i}\tilde{d}_{1,2}^{i} 
\begin{pmatrix}
n_1\\
i
\end{pmatrix}
\tilde{d}_{2,1}^{i}\tilde{d}_{2,2}^{n_2-i}
\begin{pmatrix}
n_2\\
i
\end{pmatrix}
\end{align} 
and for any $j\in M_2$, 
\begin{align}
\notag
&P^j\bigg(
K(T_{M_3},M_1)=n_1,
\tilde{K}(T_{M_3},M_2,\tilde{M}_2)=n_2 \bigg)\\
\label{we2}
&\le \sum_{1\le i\le n_1 \wedge n_2} 
\tilde{d}_{1,1}^{n_1-i}\tilde{d}_{1,2}^{i}
\begin{pmatrix}
n_1\\
i-1
\end{pmatrix}
\tilde{d}_{2,1}^{i}\tilde{d}_{2,2}^{n_2-i}
\begin{pmatrix}
n_2\\
i
\end{pmatrix}
.
\end{align}  
\end{lem}
\begin{proof}
The proof of the following lemma is almost the same as that of Lemma \ref{mma}, and is therefore omitted. 
We let $\tilde{d}_{i,l}$ play the role of  $d_{i,l}$ in Lemma \ref{mma}. 
To obtain (\ref{we1}), we  substitute $n_2+1$ for $n_2$ in (\ref{we11}), and 
to obtain (\ref{we2}), we retain $n_1$ and $n_2$ in (\ref{we22}). 
\end{proof}

\subsection{Hitting probabilities}\label{gre1}

In this section, we discuss an estimate concerning the hitting probability of a simple random walk in $\mathbb{Z}^2$. 

First, we compute the probability that the simple random walk does not hit two points until a particular random time.  
Let $P^x$ denote the law of a simple random walk starting at $x$. 
We simply write $P$ for $P^0$.  
For any $D\subset \mathbb{Z}^2$, let $T_D:=\inf \{m\ge1: S_m\in D\}$. 
For simplicity, we write $T_{x_1,\ldots,x_j}$ for $T_{\{x_1,\ldots,x_j\}}$. 
Given two distinct points $x_1$ and $x_2$ of $\mathbb{Z}^2$ and 
a nonempty subset of $A \subset \mathbb{Z}^2$ that does not contain $\{x_1,x_2\}$,  
define 
\begin{align*}
W_{i,l}:=\sum_{m=0}^\infty P^{x_i}(S_m= x_l ,m< T_A )
\end{align*}
for $i,l\in \{1,2\}$.
\begin{lem}
For any $i\neq l \in\{1,2\}$ and $A \subset \mathbb{Z}^2$,
\begin{align}
\label{hhu*}
P^{x_i}(T_A<T_{x_1,x_2})
=\frac{W_{l,l}-W_{i,l}}
{W_{1,1}W_{2,2}-W_{1,2}W_{2,1} }.
\end{align}
\end{lem}
\begin{proof}
Note that $P(T_A<\infty )=1$.  
Then, decomposing all of the events of the simple random walk starting at $x_i$ according to the time and the vertex in $\{x_1,x_2\}$ last visited before $T_A$ and taking the expectation, 
we obtain
\begin{align*}
1=
W_{i,1}P^{x_1}(T_A<T_{x_1,x_2})
+W_{i,2}P^{x_2}(T_A<T_{x_1,x_2}),
\end{align*}
for $i\in \{1,2\}$. 
As $W_{1,2}=P^{x_1}(T_{x_2}< T_A)W_{2,2}<W_{2,2}$
 and, similarly, $W_{2,1}<W_{1,1}$, 
we have
\begin{align*}
W_{1,1}W_{2,2}-W_{1,2}W_{2,1}\neq 0.
\end{align*}
Solving the above linear system gives the desired result. 
\end{proof}

Next, we recall some basic estimates of the hitting probability of a certain circle for a simple random walk in $\mathbb{Z}^2$. 
From \cite[Exercise $1.6.8$ or ($4.1$)]{Law}  and \cite[($4.3$)]{rosen}, 
we have that, in $0<r<|x|<R$,
\begin{align}\notag
P^x( T_0<\tau_R)&=\frac{\log (R/|x|)+O(|x|^{-1})}{\log R}(1+O((\log |x|)^{-1})),\\
\label{g2}
P^x( \tau_r<\tau_R)&=\frac{\log (R/|x|)+O(r^{-1})}{\log(R/{r})},
\end{align}
where $\tau_n=\inf\{m\ge 0: S_m \not\in D(0,n) \}$, as mentioned in Section 1. 
In addition, by \cite[Proposition $1.6.7$]{Law} or \cite[($2.2$)]{rosen}, we have that, 
for any $x\in D(0,n)$, 
\begin{align*}
\sum_{m=0}^\infty P^x(S_m=0,m<\tau_n)=
\frac{2}{\pi}\log \bigg(\frac{n}{|x| \vee 1} \bigg)+O(1). 
\end{align*}
In particular, for $x,y \in D(0,n/3)$,
\begin{align}\label{gg2*}
G_n(x,y)
=\frac{2}{\pi}\log \bigg(\frac{n}{d(x,y)^+}\bigg)+O((d(x,y)^+)^{-1}+n^{-1}+1).
\end{align}
Indeed, for $x,y \in D(0,n/3)$,
\begin{align*}
G_n(x,y)&\le\sum_{m=0}^\infty P^{x-y}(S_m=0,m<\tau_{4n/3})\\
&=\frac{2}{\pi}\log \bigg(\frac{n}{d(x,y)^+}\bigg)+O((d(x,y)^+)^{-1}+n^{-1}+1),\\
G_n(x,y)&\ge \sum_{m=0}^\infty P^{x-y}(S_m=0,m<\tau_{2n/3})\\
&=\frac{2}{\pi}\log \bigg(\frac{n}{d(x,y)^+}\bigg)+O((d(x,y)^+)^{-1}+n^{-1}+1).
\end{align*}
Moreover, the Markov property implies that 
\begin{align}\label{g2*}
P(\tau_n< T_0)
=(\sum_{m=0}^\infty P(S_m=0,m<\tau_n))^{-1}
=\frac{\pi}{2\log n}(1+o(1)).
\end{align}

\section{Proof of Theorem \ref{h2}}\label{Th h2}
In this section, we present a proof of Theorem \ref{h2} by dividing it into lower and upper bounds. 
\subsection{Proof of the lower bound in Theorem \ref{h2}}
 First, we discuss the lower bound of Theorem \ref{h2}. 
To show Theorem \ref{h2}, 
we introduce the following notation to establish a correspondence with the framework of Section \ref{mar1}.  
Let $\tilde{\alpha}:= \lceil 4\alpha (\log n)^2 /\pi \rceil$ 
and $x$, $x'$ be two distinct points in $D(0,n)$. 
Set 
\begin{align*}
(U_0,U_1,U_2,U_3):=(0,x,x', \partial D(0,n))
\end{align*}
and 
\begin{align*}
b_{i,l}:=P^{U_i}(\min_{s\in \{1,2,3\}}T_{U_s}=T_{U_l}),
\end{align*}
for $i\in \{0,1,2\}$ and $l\in \{1,2,3\}$, 
where $\partial G:=\{y\in \mathbb{Z}^2\setminus G: d(x,y)=1 \text{ for some }x\in G \}$ for $G\subset \mathbb{Z}^2$. 
The proof of the lower bound comprises the following two lemmas. 
\begin{lem}\label{eeu0}
If we assume $b_{1,1}\le b_{2,2}$, then
\begin{align*}
P(x,x' \in \Psi_n(\alpha))
\ge \max_{i\in \{1,2\}}b_{0,i} b_{1,2}\frac{1}{\tilde{\alpha}^2}
P^x(T_{x,x'}< \tau_n )^{2\tilde{\alpha}-2}.
\end{align*}
\end{lem}

\begin{lem}\label{eeeu0}
Set $s:=\log d(x,x')/\log n$. 
For any $\epsilon>0$, there exists some $c>0$ such that,
 for all sufficiently large $n\in \mathbb{N}$ 
and any $x$, $x' \in D(0,n^{1-\epsilon})$, 
\begin{align}
\label{eeeu1}
 \max_{i\in \{1,2\}}b_{0,i} \ge& c,\\
 \label{eeeu2}
 b_{1,2}\ge& \frac{c}{\log n},\\
 \label{eeeu3}
P^x(T_{x,x'}< \tau_n )^{2\tilde{\alpha}-2}
=&\exp\bigg(-\frac{\tilde{\alpha} \pi}{(2-s)\log n} +o(1)\log n\bigg).
\end{align}
\end{lem}
\begin{proof}[Proof of Lemma \ref{eeu0}]
For $p\ge2$ and $x\in \mathbb{Z}^2$, let
\begin{align*}
T^1_x:=\inf \{m\ge 1:S_m=x\},\quad 
T^p_x:=\inf\{m>T_x^{p-1}:S_m=x \}.
\end{align*}
Note that  the time-reversal of a simple random walk
yields $b_{1,2}=b_{2,1}$. 
Then, if we substitute $1$, $2$, $\{3\}$, $n_1$, and $n_2$ 
for $x$, $x'$, $\partial D(0,n)$, $\tilde{\alpha}-1$, and $\tilde{\alpha}$, 
 (\ref{we1**}) gives
\begin{align*}
P(x,x' \in \Psi_n(\alpha))
\ge &b_{0,1}
P^x(x,x' \in \Psi_n(\alpha), 
T_{x'}^{\tilde{\alpha}}< T_x^{\tilde{\alpha}-1} )\\
\ge &b_{0,1}
\times \sum_{0\le i\le \tilde{\alpha}-1}
 b_{1,1}^{\tilde{\alpha}-1-i}
b_{1,2}^{i+1}
\begin{pmatrix}
\tilde{\alpha}-1\\
i
\end{pmatrix}
b_{2,1}^i b_{2,2}^{\tilde{\alpha}-1-i}
\begin{pmatrix}
\tilde{\alpha}-1\\
i
\end{pmatrix}
\\
= &b_{0,1} b_{1,2} 
 \times \sum_{0\le i\le \tilde{\alpha}-1}
 b_{1,1}^{\tilde{\alpha}-1-i}
b_{1,2}^i
\begin{pmatrix}
\tilde{\alpha}-1\\
i
\end{pmatrix}
b_{2,1}^ib_{2,2}^{\tilde{\alpha}-1-i}
\begin{pmatrix}
\tilde{\alpha}-1\\
i
\end{pmatrix} \\
\ge&b_{0,1}b_{1,2}  
\bigg(\max_{0 \le i \le \tilde{\alpha}-1}b_{1,1}^{\tilde{\alpha}-1-i}
b_{1,2}^i
\begin{pmatrix}
\tilde{\alpha}-1\\
i
\end{pmatrix}
 \bigg)^2\\
\ge  &b_{0,1} b_{1,2}\frac{1}{\tilde{\alpha}^2}
(b_{1,1}+b_{1,2} )^{2\tilde{\alpha}-2}\\
= &b_{0,1} b_{1,2}\frac{1}{\tilde{\alpha}^2}
P^x(T_{x,x'}< \tau_n )^{2\tilde{\alpha}-2}
\end{align*}
(see Figure 2).
\begin{figure}[h]
\centering
\includegraphics[width=110mm]{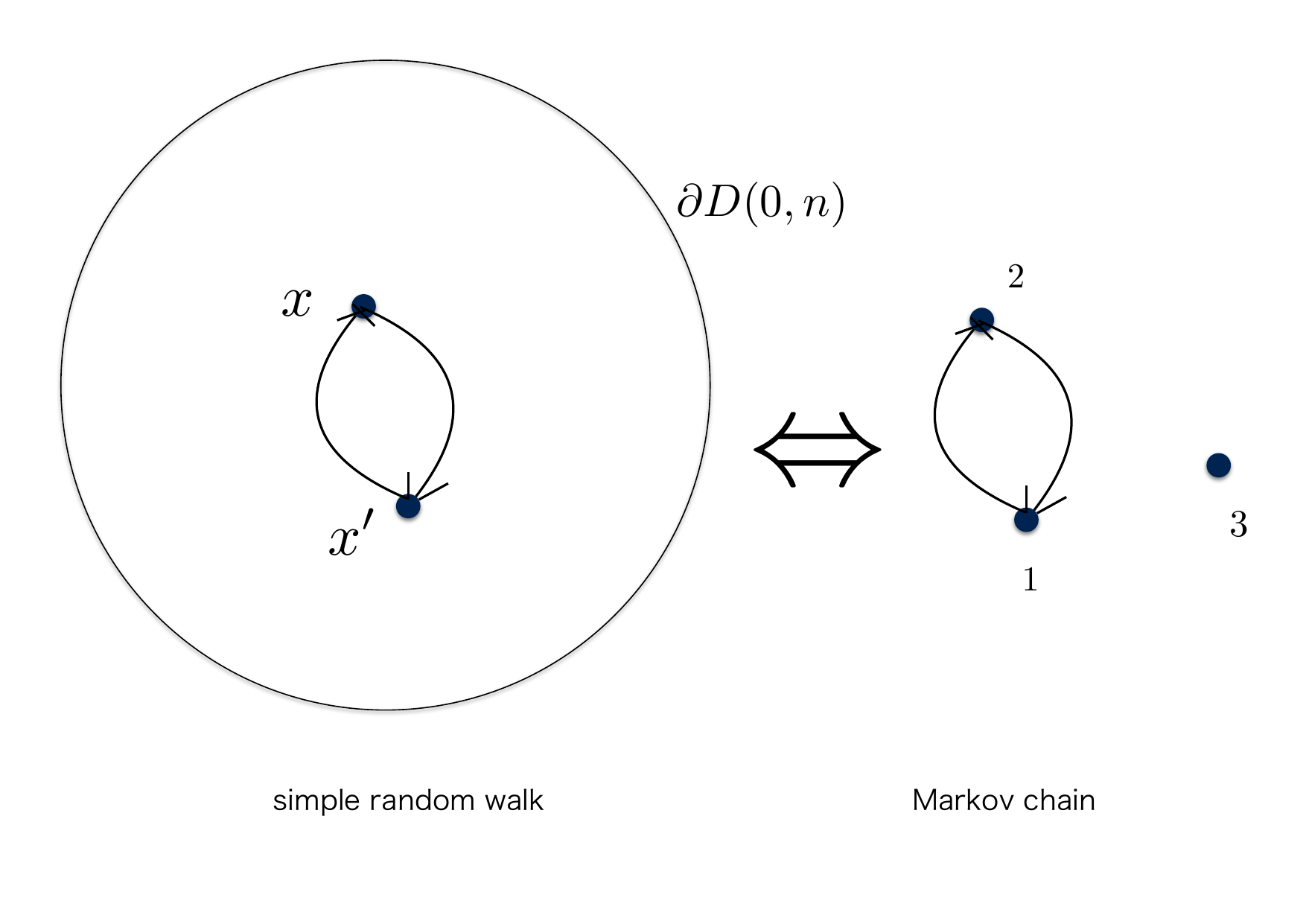}
\caption{}
\end{figure}
In addition, 
we also obtain 
\begin{align*}
P(x,x' \in \Psi_n(\alpha))
\ge &b_{0,2}
P^{x'}(x,x' \in \Psi_n(\alpha), 
T_{x}^{\tilde{\alpha}}< T_{x'}^{\tilde{\alpha}-1} )\\
\ge&b_{0,2}b_{2,1}  
\bigg(\max_{0 \le i \le \tilde{\alpha}-1}b_{1,1}^{\tilde{\alpha}-1-i}
b_{1,2}^i
\begin{pmatrix}
\tilde{\alpha}-1\\
i
\end{pmatrix}
 \bigg)^2\\
\ge &b_{0,2} b_{1,2}
\frac{1}{\tilde{\alpha}^2}
P^x(T_{x,x'}< \tau_n )^{2\tilde{\alpha}-2}.
\end{align*} 
Thus, we have the desired result. 
\end{proof}

\begin{proof}[Proof of Lemma \ref{eeeu0}]
First,  we show (\ref{eeeu3}). 
Assume $0<\epsilon<1$. 
Then, by (\ref{gg2*}), 
we have that  
\begin{align}\label{eeu}
\sum_{m=0}^{\infty}
P^{U_i}(S_m\in U_l,m<\tau_n)=
\begin{cases}
 \frac{2}{\pi}(1+o(1))\log n &\text{ if } i=l,\\
 \frac{2(1-s)}{\pi}(1+o(1))\log n &\text{ if } i\neq l
\end{cases}
\end{align}
uniformly in $x$, $x' \in D(0,n^{1-\epsilon})$, because $n/n^{1-\epsilon}\ge 3$  for all sufficiently large $n\in \mathbb{N}$. 
Hence,  if we set $T_A=\tau_n$ in (\ref{hhu*}), 
then 
\begin{align}\label{eeu1}
b_{1,3}
= \frac{\pi(1+o(1))}{2(2-s)\log n}.
\end{align}
Thus, uniformly in $x$, $x'$, 
\begin{align*}
P^x(T_{x,x'}< \tau_n )^{2\tilde{\alpha}}
= &\exp(-2\tilde{\alpha} b_{1,3}+o(1) \log n)\\
=&\exp\bigg(-\frac{\tilde{\alpha} \pi}{(2-s)\log n} +o(1)\log n\bigg).
\end{align*}
Hence, we obtain (\ref{eeeu3}). 
Next,  we confirm (\ref{eeeu1}) and (\ref{eeeu2}). 
Note that  (\ref{g2*}) yields 
\begin{align}\label{ff*}
P^x(\tau_n<T_x)=\frac{\pi}{2\log n}(1+o(1))
\end{align} 
that uniformly in $x$ and (\ref{g2}) implies that, 
for any $\epsilon>0$, there exists some $c>0$ such that, for all sufficiently large $n\in \mathbb{N}$,
\begin{align}\label{ff**}
&P^x(T_{x'}<\tau_n)
\ge P^x(T_{x'}<T_{\partial D(x', n/2)})
\ge \frac{\log (n/2n^{1-\epsilon})}{\log n/2}(1+O((\log n)^{-1}))
\ge c,\\
\label{ff***}
&P(T_x< \tau_n)
=P^x(T_0< \tau_{n})
\ge  \frac{\log (n/n^{1-\epsilon})}{\log n}(1+O((\log n)^{-1}))
\ge c,
\end{align}
where the equality comes from the time-reversal of a simple random walk. 
Note that 
\begin{align*} 
P^x(T_{x'}< \tau_n)
 =\sum_{i=0}^{\infty}b_{1,1}^ib_{1,2}
=\frac{1}{1-b_{1,1}}b_{1,2},
\end{align*}
and hence, by (\ref{ff*}) and (\ref{ff**}), for all sufficiently large $n\in \mathbb{N}$, 
\begin{align*}
b_{1,2}
\ge  P^x(\tau_n<T_x)P^x(T_{x'}< \tau_n)
\ge  \frac{c}{\log n}.
\end{align*}
Thus, we obtain (\ref{eeeu2}). 
In addition, by (\ref{ff***}), we have that there exists some $c>0$ such that, for any $n\in \mathbb{N}$, 
\begin{align*}
 \max_{i\in \{1,2\}}b_{0,i} 
\ge \frac{1}{2} P(T_{x,x'}< \tau_n)
\ge \frac{1}{2} P(T_x< \tau_n)
\ge  c.
\end{align*}
Therefore, we obtain (\ref{eeeu1}). 
\end{proof}

\begin{proof}[Proof of the lower bound in Theorem \ref{h2}]
Using Lemmas \ref{eeu0} and \ref{eeeu0}, 
for any $\delta>0$ and $\epsilon>0$, there exists  $n_0\in\mathbb{N}$ such that, 
for $n\ge n_0$ and  $x$, $x' \in D(0,n^{1-\epsilon})$, 
\begin{align}\label{fi1}
P(x,x' \in \Psi_n(\alpha))
\ge \exp\bigg(-\frac{\tilde{\alpha} \pi}{(2-s)\log n} -\delta \log n\bigg).
\end{align}
By (\ref{fi1}), for any $\delta>0$, $\epsilon>0$, 
 $\beta'\in (0,\beta]$, and  all sufficiently large $n\in \mathbb{N}$,
\begin{align*}
E[|\{ (x,x')\in \Psi_n(\alpha)^2:  d(x,x')\le n^{\beta} \}|] 
\ge &\sum_{\substack{x,x' \in D(0, n^{1-\epsilon}),\\ 0<d(x,x')\le n^{\beta'}} }
 P(x,x' \in \Psi_n(\alpha))\\
\ge &\sum_{\substack{x,x' \in D(0, n^{1-\epsilon}),\\ 0<d(x,x')\le n^{\beta'}} } 
\exp\bigg(-\frac{\tilde{\alpha} \pi}{(2-\beta')\log n} -\delta \log n\bigg)\\
\ge &n^{2+2\beta'-2\epsilon -4\alpha/(2-\beta' ) -2\delta}. 
\end{align*}
Note that an elementary calculation yields $\max_{\beta'\le \beta}2+2\beta' -4\alpha/(2-\beta' )=2+2\beta' -4\alpha/(2-\beta' )|_{\beta'=\beta \wedge (2-\sqrt{2\alpha})}=\hat{\rho}_2(\alpha,\beta)$. 
Then, as $\delta>0$ and $\epsilon>0$ are arbitrary, we obtain the desired result. 
\end{proof}

\subsection{Proof of the upper bound in Theorem \ref{h2}}
We now prove the upper bound in Theorem \ref{h2}. 
The idea of the upper bound in Theorem \ref{h2} is similar to that of the lower bound, 
and we do not require a combinatorial argument in Section \ref{mar1}. 
First, we derive the probability that two particular points are $\alpha$-favorite points. 
\begin{lem}\label{qf}
For any $\delta>0$, there exists some $n_0\in \mathbb{N}$ such that, for any $n\ge n_0$ and  $x$, $x' \in D(0,n)$, 
\begin{align*}
P(x,x' \in \Psi_n(\alpha))
\le \exp\bigg(-\frac{\tilde{\alpha} \pi}{(2-s)\log n} +\delta \log n \bigg).
\end{align*}
\end{lem}
\begin{proof}
The strong Markov property implies that
\begin{align}
\notag
&P(x,x' \in \Psi_n(\alpha))\\
\notag
\le &
 P(|\{ l< \tau_n :S_l\in \{x',x\} \}| \ge  2 \tilde{\alpha})\\
\notag
 =& P(T_x<T_{x'} \wedge \tau_n )
P^x(|\{ l< \tau_n :S_l\in \{x',x\} \}| \ge  2 \tilde{\alpha}-1)\\
\notag
&\quad+
 P(T_{x'}<T_x \wedge \tau_n ) 
P^{x'}(|\{ l< \tau_n :S_l\in \{x',x\} \}| \ge  2 \tilde{\alpha}-1)\\
\notag
\le  &\max_{y\in \{x,x'\}}
P^y(|\{ l< \tau_n :S_l\in \{x',x\} \}| \ge  2 \tilde{\alpha}-1)
\\
\notag
\le & \max_{y' \in \{x,x'\}}
P^{y'}(T_{x,x'}<\tau_n)
\max_{y\in \{x,x'\}}
P^y(|\{ l< \tau_n :S_l\in \{x',x\} \}| \ge  2 \tilde{\alpha}-2)
\\
\notag
\le&\cdots
\\
\notag
\le & \max_{y' \in \{x,x'\}}P^{y'}(T_{x,x'}< \tau_n )^{2\tilde{\alpha}-1}
\\
\label{tre}
\le & \max_{y' \in \{x,x'\}}P^{y'}(T_{x,x'}< \tau_{2n} )^{2\tilde{\alpha}-1}.
\end{align}
Then, letting $T_A=\tau_{2n}$ in (\ref{hhu*}) 
and using the a similar argument to that for (\ref{eeu}) and (\ref{eeu1}), we obtain 
\begin{align*}
1-\max_{y' \in \{x,x'\}}P^{y'}(T_{x,x'}<\tau_{2n} )
= \frac{\pi(1+o(1))}{2(2-s)\log n} 
\end{align*}
uniformly in $x$, $x'\in D(0,n)$.
Hence, by (\ref{tre}), we have the desired result.
\end{proof}

\begin{proof}[Proof of the upper bound in Theorem \ref{h2}]
Compared with the lower bound, we need more estimates to bound the summation of 
$P(x,x' \in \Psi_n(\alpha))$ in $x' \in D(x,n^\beta)$. 
We divide the domain of $x'$  into  
$D(x,n^{\beta(1-\alpha)})$ and $D(x,n^{\beta(1-\alpha)})^c \cap D(x,n^\beta )$. 
We now consider the first case.  
As $\min\{1/\sqrt{\alpha},2/(2-\beta)\}\ge1$ with $0<\alpha, \beta<1$ 
and $F_{2, \beta}(\gamma)$ is decreasing in $\gamma \in [1, (1/\sqrt{\alpha}) \wedge (2/(2-\beta))]$, 
\begin{align}\label{jjt}
\hat{\rho}_2(\alpha,\beta)
\ge \rho_2(\alpha,\beta)
\ge 2+2\beta-2\alpha F_{2, \beta}(1)
=2(1-\alpha)+2\beta(1-\alpha).
\end{align}
In addition,  (\ref{g2*}) implies that, for any $\delta>0$, there exists some $n_0\in \mathbb{N}$ such that, for any $n\ge n_0$, 
\begin{align}\notag
E[| \Psi_n(\alpha) |]
=&\sum_{x\in D(0,n)}P(x\in\Psi_n(\alpha)  )\\
\notag
\le& \sum_{x\in D(0,n)}P^x(T_x<\tau_n  )^{\tilde{\alpha}-1}\\
\notag
\le& \sum_{x\in D(0,n)}P^x(T_x<T_{\partial D(x,2n)})^{\tilde{\alpha}-1}\\
\label{jjttt}
\le& 4n^2\times  \bigg(1-\frac{\pi}{2\log n}+o\bigg(\frac{1}{\log n}\bigg)\bigg)^{\tilde{\alpha}-1}
\le n^{2(1-\alpha)+\delta}. 
\end{align}
Using (\ref{jjttt}), we obtain
\begin{align}\notag
&E[|\{ (x,x')\in \Psi_n(\alpha)^2:d(x,x')\le n^{\beta(1-\alpha)} \}|] \\
\label{nnu*}
\le  &E[| \Psi_n(\alpha) |]\times Cn^{2\beta(1-\alpha)} 
\le n^{\hat{\rho}_2(\alpha, \beta)+\delta},
\end{align}
and hence, we obtain the first case. 
In addition, using Lemma \ref{qf},  we have that, for any $\delta>0$,  $0<h<1$, 
 $\beta'\in [\beta(1-\alpha)h^{-1},\beta]$, and all sufficiently large $n\in \mathbb{N}$,
\begin{align}\notag
&E[|\{ (x,x')\in \Psi_n(\alpha)^2: n^{\beta' h}<d(x,x')\le n^{\beta'} \}|] \\
\notag
= &\sum_{\substack{x,x' \in D(0, n),\\ n^{\beta' h}<d(x,x')\le n^{\beta'}} }
 P(x,x' \in \Psi_n(\alpha))\\
\notag
\le &\sum_{\substack{x,x' \in D(0, n), \\n^{\beta' h}<d(x,x')\le n^{\beta'} }} 
\exp\bigg(-\frac{\tilde{\alpha} \pi}{(2-\beta'h)\log n} +2\delta \log n\bigg )\\
\label{nnu**}
\le &n^{2+2\beta'-4\alpha/(2-\beta'h) +3\delta}.
\end{align}
Note that $\max_{(1-\alpha)\beta\le \beta'\le \beta}2+2\beta'-4\alpha/(2-\beta') =\hat{\rho}_2(\alpha, \beta)$. 
Hence, (\ref{nnu**}) yields 
\begin{align}\label{nnu***}
E[|\{ (x,x')\in \Psi_n(\alpha)^2: n^{\beta(1-\alpha)}<d(x,x')\le n^{\beta} \}|]
\le Cn^{\hat{\rho}_2(\alpha, \beta) +3\delta}.
\end{align}
Then, because $\delta>0$ and $0<h<1$ are arbitrary, 
(\ref{nnu*}) and (\ref{nnu***}) produce the desired result. 
\end{proof}


\section{Key lemma for the proof of the upper bound in Theorem \ref{h1}}\label{key*}
This section presents large deviation estimates  that are the key to the proof of the upper bound in Theorem \ref{h1}. 
We will estimate the probability of the event that two points are $\alpha$-favorite points conditioned by crossing numbers. 
Our method involves reducing a simple random walk path to a general Markov chain. This procedure is discussed in Section \ref{mar1}. 

We now present some definitions. 
Let $r_0:=0$ and $r_k:=(k!)^3$ for $k\in \mathbb{N}$.  
Set $K_n:=n^3r_n$  for $n\in \mathbb{N}$ and $f(n,k):=6\alpha(n-k)^2\log k$. 
Now, fix $0<\alpha,\beta<1$.  
For $z\in D(0,K_n)$, 
let 
$s_1:=T_{\partial D(z, r_{k})}$, 
\begin{align*}
t_i &:=\inf \{ m > s_i:  S_m \in \partial D(z,r_{k-1})\},\\
s_{i+1} &:=\inf \{ m > t_i :  S_m \in \partial D(z,r_k)\},
\end{align*}
and 
$N_{n,k}^z:= \sup \{i\in \mathbb{N}:   t_i <\tau_{K_n}  \}$ for $i\ge 1$.   
That is,  $N_{n,k}^z$   
denotes the number of excursions from $\partial D(z, r_{k})$ to $\partial D(z,r_{k-1})$ up to time $\tau_{K_n}$. 
To state the assertion, we define the domains $\mathbf{Go}^{\beta}$ and $\mathbf{Go}^{h,\beta}$ by 
\begin{align*}
\mathbf{Go}^{\beta}: &=\{(z,x): z \in D(0,K_n), x \in D(z, r_{\beta n-2})\cap D(0,K_n) \},\\
\mathbf{Go}^{h,\beta} : &=\{(z,x,x'): (z,x), (z,x') \in \mathbf{Go}^{\beta}, d(x,x') \ge  r_{\beta hn/2 -3} \}.
\end{align*}  
Let $\tilde{\Psi}_n(\alpha)
:=\{x:K(\tau_n,x) \in [4\alpha/\pi(\log n)^2, 4/\pi (\log n)^2]\}$. 
\begin{prop}\label{uu0} 
\quad 

$(1)$  For any $\delta>0$, there exists some $C>0$ such that, for any $\gamma \ge 0$ and $n\in \mathbb{N}$, 
\begin{align}\label{u1}
\max_{z\in {\mathbb{Z}}^2}P\bigg(\frac{N_{n,\beta n}^z}{f(n,\beta n)}\ge \gamma^2 \bigg)
\le CK_n^{-2\alpha F_{0,\beta}(\gamma)+\delta }.
\end{align}

$(2)$ For any $\delta>0$,  
there exist $C>0$,  $\delta_0>0$, and $0<h<2$ (with $h$ close to $2$) 
such that, for any $\gamma \in [0,1/\sqrt{\alpha}]$ and $n\in \mathbb{N}$, 
\begin{align}\label{u3}
\max_{(z,x,x')\in \mathbf{Go}^{h,\beta} }
P\bigg(x,x' \in \tilde{\Psi}_{K_n}(\alpha), 
\gamma^2 \le \frac{N_{n,\beta n}^z}{f(n,\beta n)} < (\gamma+\delta_0)^2 \bigg)
\le CK_n^{-2\alpha F_{2,\beta}(\gamma)+\delta }.
\end{align}
\end{prop}
From (\ref{u3}), 
we obtain the following bound, which we will use in a crucial way. 
\begin{cor}\label{u3+}
For any $\delta>0$, there exist $C>0$ and $\delta_0>0$ such that, for any $n\in \mathbb{N}$,
\begin{align*}
\max_{(z,x,x')\in \mathbf{Go}^{h,\beta} }
P\bigg(x,x' \in \tilde{\Psi}_{K_n}(\alpha), \frac{N_{n,\beta n}^z}{f(n,\beta n)}
< \bigg(\frac{1}{\sqrt{\alpha} }+\delta_0\bigg)^2\bigg)
\le CK_n^{-2\alpha F_{2,\beta}(\min\{1/\sqrt{\alpha},2/(2-\beta) \})+\delta }.
\end{align*}
\end{cor}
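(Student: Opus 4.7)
The plan is to deduce the corollary from Lemma \ref{uu0}(2) by partitioning the range of $N^z_{n,\beta n}/n_{\beta n}$ into finitely many sub-intervals of length $\delta'$, applying (\ref{u3}) on each, and concluding with a union bound. Set $\gamma^* := \min\{1/\sqrt{\alpha}, 2/(2-\beta)\}$. A direct computation of the derivative shows that $F_{2,\beta}(\gamma)$ is a convex quadratic with unconstrained minimum at $\gamma = 2/(2-\beta)$; in particular $F_{2,\beta}$ is strictly decreasing on $[0, 2/(2-\beta)]$ and strictly increasing afterwards, so over the constraint region $\{\alpha\gamma^2 \le 1\}$ its minimum is attained exactly at $\gamma^*$, matching the exponent in the corollary.

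Given $\delta > 0$, use the continuity of $F_{2,\beta}$ to pick $\delta_0 > 0$ so small that $|F_{2,\beta}(\gamma) - F_{2,\beta}(\gamma^*)| < \delta/(2\alpha)$ whenever $|\gamma - \gamma^*| \le \delta_0$, and fix an auxiliary discretization scale $\delta' > 0$. Set $M := \lceil (1/\sqrt{\alpha}+\delta_0)^2/\delta' \rceil$ (independent of $n$) and $\gamma_j := \sqrt{j\delta'}$, so that the intervals $[\gamma_j^2, \gamma_j^2 + \delta')$ for $0 \le j \le M-1$ partition $[0,(1/\sqrt{\alpha}+\delta_0)^2)$. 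A union bound then yields
\begin{align*}
P\Bigl(x,x' \in \tilde{\Psi}_{K_n}(\alpha),\; N^z_{n,\beta n}/n_{\beta n} < (1/\sqrt{\alpha}+\delta_0)^2\Bigr) \le \sum_{j=0}^{M-1} P_j,
\end{align*}
where $P_j := P(x,x' \in \tilde{\Psi}_{K_n}(\alpha),\; \gamma_j^2 \le N^z_{n,\beta n}/n_{\beta n} < \gamma_j^2 + \delta')$. For every index $j$ with $\gamma_j \le 2/(2-\beta)$, (\ref{u3}) gives $P_j \le CK_n^{-2\alpha F_{2,\beta}(\gamma_j) + \delta}$. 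By the monotonicity of $F_{2,\beta}$ on $[0, 2/(2-\beta)]$ and the continuity choice of $\delta_0$, each such $P_j$ is bounded by $CK_n^{-2\alpha F_{2,\beta}(\gamma^*) + 2\delta}$.

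The main obstacle is the case $1/\sqrt{\alpha} > 2/(2-\beta)$, in which $\gamma^* = 2/(2-\beta)$ but some indices have $\gamma_j > 2/(2-\beta)$ and so lie outside the stated range of (\ref{u3}). For these $j$ I would revisit the proof of Lemma \ref{uu0}(2) and observe that the same excursion decomposition still gives $P_j \le CK_n^{-2\alpha F_{2,\beta}(\gamma_j) + \delta}$ for $\gamma_j$ up to roughly $1/\sqrt{\alpha}+\delta_0$, the local-time cap $K(\tau_{K_n},x) \le (4/\pi)(\log K_n)^2$ built into $\tilde{\Psi}_{K_n}(\alpha)$ providing precisely the additional constraint that accounts for the $(2/\beta)(1-\gamma(1-\beta))^2$ term in $F_{2,\beta}$. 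Since $F_{2,\beta}$ is strictly increasing on $[2/(2-\beta),\infty)$, these bounds are strictly sharper than the target $CK_n^{-2\alpha F_{2,\beta}(\gamma^*) + 2\delta}$ and are therefore harmless. Summing the $M$ subinterval bounds and absorbing the $n$-independent prefactor $M$ into $K_n^{\delta}$ then yields the stated bound. The principal delicate issue is precisely this mild extension of (\ref{u3}) past $\gamma = 2/(2-\beta)$; if that extension is not immediate from the lemma's proof, one must instead supply a separate large-deviation argument showing that $N^z_{n,\beta n}/n_{\beta n}$ cannot substantially exceed $(2/(2-\beta))^2$ while keeping both $x$ and $x'$ in $\tilde{\Psi}_{K_n}(\alpha)$.
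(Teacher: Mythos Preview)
Your approach is exactly the paper's: set $\delta'=\delta_0$, partition $[0,(1/\sqrt{\alpha}+\delta_0)^2)$ into $O(1)$ sub-intervals of width $\delta_0$, apply (\ref{u3}) on each, take the worst exponent, and use that $F_{2,\beta}$ attains its minimum on this range at $\min\{1/\sqrt{\alpha}+\delta_0,\,2/(2-\beta)\}$ together with the continuity choice of $\delta_0$ to finish. You are in fact more scrupulous than the paper on one point: the paper simply invokes (\ref{u3}) over the whole partition, including those $\gamma_j$ with $\gamma_j>2/(2-\beta)$, without comment, whereas you flag this as the delicate case and propose to revisit the proof of Lemma~\ref{uu0}(2).

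Your proposed fix is correct in spirit but the mechanism you cite is slightly off. It is not the \emph{upper} local-time cap that rescues the bound; the relevant constraint in the proof of Lemma~\ref{uu0}(2) is the \emph{lower} bound $c_2',c_3'\ge 1$ (coming from $K(\tau_{K_n},\cdot)\ge \frac{4\alpha}{\pi}(\log K_n)^2$), which is what forces $\sqrt{(c_2'+c_3')/2}\ge 1$ and hence $(\sqrt{(c_2'+c_3')/2}-\gamma(1-\beta))^2\ge (1-\gamma(1-\beta))^2$ whenever $\gamma(1-\beta)\le 1$. Thus the conclusion of (\ref{u3}) actually persists for all $\gamma\le 1/(1-\beta)$, and on $[2/(2-\beta),1/(1-\beta)]$ monotonicity of $F_{2,\beta}$ past its minimum makes these terms harmless. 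For the residual range $\gamma\in(1/(1-\beta),\,1/\sqrt{\alpha}+\delta_0]$ (nonempty only when $\beta<1-\sqrt{\alpha}$) the proof of Lemma~\ref{uu0}(2) yields only the exponent $F_{0,\beta}(\gamma)=\gamma^2(1-\beta)$, but since $\gamma^2(1-\beta)>1/(1-\beta)\ge 2/(2-\beta)=F_{2,\beta}(\gamma^*)$ this is still stronger than what you need.
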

\begin{proof}
 If we set $C'=\lceil  1/(\sqrt{\alpha}\delta_0) \rceil$, 
(\ref{u3}) implies that 
\begin{align*}
&\max_{(z,x,x')\in \mathbf{Go}^{h,\beta} }
P\bigg(x,x' \in \tilde{\Psi}_{K_n}(\alpha),
\frac{N_{n,\beta n}^z}{f(n,\beta n)}
< \bigg(\frac{1}{\sqrt{\alpha} }+\delta_0\bigg)^2\bigg)\\
\le&
\sum_{k=1}^{C'-1}
\max_{(z,x,x')\in \mathbf{Go}^{h,\beta} }
P\bigg(x,x' \in \tilde{\Psi}_{K_n}(\alpha),
(k-1)^2 \delta_0^2 
\le \frac{N_{n,\beta n}^z}{f(n,\beta n)}
< k^2 \delta_0^2\bigg)\\
+&
\max_{(z,x,x')\in \mathbf{Go}^{h,\beta} }
P\bigg(x,x' \in \tilde{\Psi}_{K_n}(\alpha),
(C'-1)^2 \delta_0^2 
\le \frac{N_{n,\beta n}^z}{f(n,\beta n)}
< \bigg(\frac{1}{\sqrt{\alpha} }+\delta_0\bigg)^2\bigg)
\\
\le&C'\max_{\gamma \le 1/\sqrt{\alpha} } 
K_n^{-2\alpha F_{2,\beta}(\gamma)+\delta }\\
= &C'K_n^{-2\alpha F_{2,\beta}(\min\{1/\sqrt{\alpha},2/(2-\beta) \} )+\delta }.
\end{align*}
Here, the equality comes from the fact that $F_{2,\beta}$ is minimized at $2/(2-\beta)$. 
Hence, we obtain the desired result.
\end{proof}

Let us move to the proof of Proposition \ref{uu0}. 
The proof of the first assertion (\ref{u1}) is rather simple. 
\begin{proof}[Proof of Proposition \ref{uu0} (1)]
Substituting $R=K_n$ and $r=r_{\beta n-1}$ for (\ref{g2}), 
the strong Markov property implies that
\begin{align*}
\max_{z\in D(0,K_n)}P\bigg(\frac{N_{n,\beta n}^z}{f(n,\beta n)}\ge \gamma^2 \bigg)
\le &\max_{\substack{ z\in D(0,K_n),\\y\in \partial D(z,r_{\beta n}) }}
P^y(T_{\partial D(z,r_{\beta n-1})}<T_{\partial D(z,2K_n )}  )
^{ \lceil \gamma^2f(n,\beta n) \rceil-1}\\
\le &\max_{y\in \partial D(0,r_{\beta n}) }P^y(\tau_{r_{\beta n-1}}<\tau_{2K_n }  )
^{ \lceil \gamma^2f(n,\beta n) \rceil-1}\\
\le &\bigg(\frac{\log (2K_n/r_{\beta n})+O(r_{\beta n-1}^{-1})}{\log(2K_n/r_{\beta n-1})}\bigg)
^{\lceil \gamma^2f(n,\beta n) \rceil-1}\\
=&\bigg(1-\frac{1+o(1)}{n-\beta n}\bigg)
^{\lceil \gamma^2f(n,\beta n) \rceil-1}\\
=&\exp(-6\alpha \gamma^2 (n-\beta n) \log n+o(n\log n) )\\
=&CK_n^{-2\alpha F_{0,\beta}(\gamma)+o(1) },
\end{align*}
and hence, the desired result holds. 
\end{proof}

To show the second assertion (\ref{u3}), we define and estimate the following probabilities.
For any $(z,x,x')\in \mathbf{Go}^{h,\beta}$, let 
\begin{align*}
(V_1,V_2):=(\partial D(z,r_{\beta n}), \{x,x'\})
\end{align*}
and, for $l= 1,2$,
\begin{align*}
a_{1,l}:&=
\max_{y\in \partial D(z,r_{\beta n-1})}
P^y(\min_{s\in \{1,2\} } T_{V_s}=T_{V_l}),\\
a_{2,l}:&=
\max_{y\in \{x,x'\}}
P^y(\min_{s\in \{1,2\} } T_{V_s}=T_{V_l} ). 
\end{align*} 
\begin{lem}\label{d1*}
For any $\epsilon>0$,  there exists some $0<h<2$ 
such that, for any $(z,x,x')\in \mathbf{Go}^{h,\beta}$ and all sufficiently large $n\in \mathbb{N}$, 
\begin{align}\label{d2}
a_{1,1}&\le 1-\frac{2-\epsilon}{\beta n}, \quad
a_{1,2}\le \frac{2+\epsilon}{\beta n},  \\
\label{d2*}
a_{2,1}&\le  \frac{\pi+\epsilon}{6\beta n\log n}\quad \text{ and }\quad
a_{2,2}\le 1-\frac{\pi-\epsilon}{6\beta n\log n}.
\end{align}
\end{lem}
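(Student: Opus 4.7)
The plan is to derive all four inequalities from the last-exit decomposition (\ref{hhu*}) together with the Green's function estimate (\ref{gg2*}), exploiting the fact that at the scales $\rho := r_{\beta n - 1}$ and $R := r_{\beta n}$ the ball $D(z, R)$ is translationally indistinguishable from any ball around an interior point up to negligible error. Throughout, $\log R = 3 \beta n \log n (1 + o(1))$ and $\log(R/\rho) = 3 \log n (1 + o(1))$, so $\log(R/\rho)/\log R = (1 + o(1))/(\beta n)$.

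For the bounds on $a_{2,\cdot}$, I would apply (\ref{hhu*}) with $(x_1, x_2) = (x, x')$ and $\tilde{\tau} = T_{\partial D(z, R)}$. Writing $G$ for the Green's function of the walk killed on exiting $D(z, R)$, translation by $z$ and (\ref{gg2*}) give, uniformly for $x, x' \in D(z, r_{\beta n - 2})$,
\[
W_{1,1} = (1 + o(1)) W_{2,2} = \frac{6 \beta n \log n}{\pi}(1 + o(1)),
\]
while the separation hypothesis $d(x, x') \geq r_{\beta h n / 2 - 3}$ produces
\[
W_{1,2} = W_{2,1} = \frac{2}{\pi} \log(R/|x - x'|)(1 + o(1)) \leq W_{1,1}\bigl(1 - h/2 + o(1)\bigr).
\]
Substituting into (\ref{hhu*}), both the numerator $W_{i,i} - W_{1,2}$ and the denominator $W_{1,1} W_{2,2} - W_{1,2}^2$ factor cleanly to give
\[
P^{x_i}(\tilde{\tau} < T_{x,x'}) = \frac{1 + o(1)}{W_{1,1}(2 - h/2)} \qquad (i = 1, 2).
\]
Choosing $h$ sufficiently close to $2$ makes $(2 - h/2)$ arbitrarily close to $1$, which, combined with the value of $W_{1,1}$ above, yields both $a_{2,1} \leq (\pi + \epsilon)/(6 \beta n \log n)$ and $a_{2,2} \leq 1 - (\pi - \epsilon)/(6 \beta n \log n)$.

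For the bounds on $a_{1,\cdot}$, I would start from $y \in \partial D(z, \rho)$ and use the identity $P^y(T_x < T_{\partial D(z, R)}) = G(y, x)/G(x, x)$. Since $|y - x| = \rho(1 + o(1))$, (\ref{gg2*}) gives $G(y, x) = (6 \log n/\pi)(1 + o(1))$, hence $P^y(T_x < T_{\partial D(z, R)}) = (1 + o(1))/(\beta n)$, and likewise with $x'$ in place of $x$. A union bound then yields $a_{1,2} \leq (2 + \epsilon)/(\beta n)$. For $a_{1,1}$, inclusion--exclusion combined with the strong Markov property applied at $T_x$ or $T_{x'}$ gives
\[
P^y\bigl(T_{\{x,x'\}} < T_{\partial D(z, R)}\bigr) \geq \frac{2(1 + o(1))}{\beta n} - \frac{2(1 + o(1))}{\beta n} \cdot \frac{G(x, x')}{G(x, x)}.
\]
The ratio $G(x, x')/G(x, x)$ is at most $1 - h/2 + o(1)$ by the separation hypothesis, so $P^y(T_{\{x,x'\}} < T_{\partial D(z, R)}) \geq (h + o(1))/(\beta n)$, and taking $h$ close to $2$ yields $a_{1,1} \leq 1 - (2 - \epsilon)/(\beta n)$.

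The hard part will be the careful translation of (\ref{gg2*}), which is stated for the exit time from $D(0, n)$, to the analogous Green's function on $D(z, R)$ with $x$ close to but not equal to $z$. Because $|x - z| \leq r_{\beta n - 2} \ll R$, the balls $D(z, R)$ and $D(x, R)$ differ only by a thin annular sliver, so both the exit time and the harmonic measure perturb at subleading order; sandwiching between $D(0, R \pm r_{\beta n - 2})$ (after translation) takes care of this rigorously. Once this translation is in hand, all four inequalities reduce to the elementary asymptotic arithmetic sketched above, with the free parameter $h$ (close to $2$) absorbing the $\epsilon$-slack in the hypothesis.
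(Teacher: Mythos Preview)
Your approach is essentially the same as the paper's. Both obtain (\ref{d2*}) by plugging the Green's function values from (\ref{gg2*}) into the last-exit identity (\ref{hhu*}) with $\tilde\tau=T_{\partial D(z,r_{\beta n})}$, and both obtain the bound on $a_{1,2}$ by a union bound after estimating $P^y(T_x<T_{\partial D(z,r_{\beta n})})$ for a single target (the paper does this via (\ref{g2}) and ball inclusions, you via the identity $G(y,x)/G(x,x)$, which is equivalent). For $a_{1,1}$ the paper simply cites (6.16) of \cite{Dembo2}; your self-contained inclusion--exclusion argument is a legitimate alternative and gives the same $(h+o(1))/(\beta n)$ lower bound on $P^y(T_{\{x,x'\}}<T_{\partial D(z,r_{\beta n})})$.

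One slip to fix: your displayed equality $P^{x_i}(\tilde\tau<T_{x,x'})=(1+o(1))/\bigl(W_{1,1}(2-h/2)\bigr)$ is not an equality. The separation hypothesis only gives $\theta:=W_{1,2}/W_{1,1}\le 1-h/2+o(1)$, not $\theta=1-h/2$, and the factored expression is $(1+o(1))/\bigl(W_{1,1}(1+\theta)\bigr)$ with $\theta\in[0,\,1-h/2+o(1)]$. Your stated value is therefore only the \emph{lower} endpoint, which suffices for $a_{2,2}$ but not for $a_{2,1}$. The upper bound on $a_{2,1}$ instead follows from $\theta\ge 0$, giving $p_{x_i}\le (1+o(1))/W_{1,1}$ with no condition on $h$ needed. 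The conclusions are unaffected.
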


\begin{proof}
To obtain the results, we use the relation between the hitting probability and Green's functions from  Section \ref{gre1}.  
We fix $\epsilon>0$. 
First, we show (\ref{d2}). 
From \cite[$(6.16)$]{Dembo2}, we obtain the following result for all sufficiently large $n\in \mathbb{N}$: 
\begin{align*}
a_{1,1}\le 1-\frac{2-\epsilon}{\beta n}.
\end{align*}
In addition, 
because $D(x,r_{\beta n-1}/2)\subset D(z,r_{\beta n-1})$ and 
$D(z,r_{\beta n})\subset D(x,2r_{\beta n})$ hold for $(z,x)\in \mathbf{Go}^{\beta}$, 
(\ref{g2}) implies that, for $y'\in \{x,x'\}$ and all sufficiently large $n\in \mathbb{N}$,
\begin{align*}
\max_{y\in \partial D(z,r_{\beta n-1})}
P^y(T_{y'}<T_{\partial D(z, r_{\beta n})}  )
\le& \max_{y\in  D(y',r_{\beta n-1}/2)^c}
P^y(T_{y'}<T_{\partial D(y', 2r_{\beta n})}  )\\
=& \max_{y\in \partial D(y',r_{\beta n-1}/2)}
P^y(T_{y'}<T_{\partial D(y', 2r_{\beta n})}  )\\
\le&\frac{\log (4r_{\beta n}/r_{\beta n-1})+O(r_{\beta n-1}^{-1})}{\log r_{\beta n}}(1+o(1))\\
\le &\frac{1+\epsilon}{\beta n}.
\end{align*}
As $a_{1,2}\le \max_{y\in \partial D(z,r_{\beta n-1})}
P^y(T_{x}<T_{\partial D(z, r_{\beta n})}  )
+\max_{y\in \partial D(z,r_{\beta n-1})}
P^y(T_{x'}<T_{\partial D(z, r_{\beta n})}  )$, we have (\ref{d2}). 
Next, we estimate (\ref{d2*}).  
From (\ref{gg2*}) and (\ref{g2*}), for any $(z,x,x')\in \mathbf{Go}^{h,\beta}$ and $y,y'\in \{x,x'\}$, 
\begin{align}\label{hhu}
\sum_{m=0}^\infty P^{y}(S_m=y' ,m<T_{\partial D(z, r_{\beta n})})
\begin{cases}
=\frac{2+o(1)}{\pi} 3\beta n\log n \quad&\text{ if }y=y',\\
\le \frac{2+o(1)}{\pi} \frac{3\beta(2-h)}{2}n\log n\quad&\text{ if }y\neq y'
\end{cases}
\end{align}
because $r_{\beta n}/r_{\beta n-2}\ge 3$  for all sufficiently large $n\in \mathbb{N}$.
Hence, if we substitute $T_{\partial D(z, r_{\beta n})}$ into $T_A$ in (\ref{hhu*}), 
then (\ref{hhu}) and the choice of $h$ close to $2$ yield (\ref{d2*}) for all sufficiently large $n\in \mathbb{N}$. 
\end{proof}

\begin{proof}[Proof of Proposition \ref{uu0} (2)]
To obtain the desired result, we first use the Markov chain method described in Section \ref{mar1} 
and then present some detailed computations with the aid of Lemma \ref{d1*}.    
Let 
$$\tilde{T}:=
\inf\{ m>T_{\partial D(z, r_{\beta n-1})}: S_m \in \partial D(z, r_{\beta n}) \}$$
and
\begin{align*}
a'_{1,1}:&=\max_{y\in \partial D(z,r_{\beta n})}
P^y(T_{\partial D(z, r_{\beta n-1})}<\tau_{K_n},
\tilde{T}<T_{x_1,x_2}),\\
a'_{1,2}:&=\max_{y\in \partial D(z,r_{\beta n})}
P^y(T_{\partial D(z, r_{\beta n-1})}<\tau_{K_n},
\tilde{T}>T_{x_1,x_2}).
\end{align*}
Then, the strong Markov property implies that, 
for any $c_1$, $c_2$, $c_3 \in \mathbb{N}$, 
\begin{align}\notag
&\max_{(z,x,x')\in \mathbf{Go}^{h,\beta} }
P(K(\tau_{K_n},x)=c_2, K(\tau_{K_n},x')=c_3, N_{n,\beta n}^z=c_1 )\\
\notag
\le &\max_{\substack{(z,x,x')\in \mathbf{Go}^{h,\beta},\\  y\in \{x,x'\} }}
P^{y}(K(\tau_{K_n},x)+K(\tau_{K_n},x')=c_2+c_3, N_{n,\beta n}^z=c_1 )\\
\label{v1-}
+ &\max_{\substack{(z,x,x')\in \mathbf{Go}^{h,\beta}, \\y\in \partial D(z,r_{\beta n})}}
P^{y}(K(\tau_{K_n},x)+K(\tau_{K_n},x')=c_2+c_3, N_{n,\beta n}^z=c_1 ).
\end{align}
Now, we present estimates necessary to justify the truncation using (\ref{we1}) and (\ref{we2}). 
The truncation involves excursion counts between concentric circles around each point. 
We consider the path corresponding to $(X_m)_{m=0}^\infty$, $M_1=\{x,x'\}$, $M_2=\partial D(z, r_{\beta n})$, 
$\tilde{M}_2=\partial D(z, r_{\beta n-1})$, and $M_3=D(0,K_n)^c$ in (\ref{we1}) or (\ref{we2}). 
Therefore, if we set $n_1=c_2+c_3$, 
$n_2=c_1$, then (\ref{we1}) and (\ref{we2}) yield
\begin{align*}
&\max_{y\in \{x,x'\}}
P^{y}(K(\tau_{K_n},x)+K(\tau_{K_n},x')=c_2+c_3, N_{n,\beta n}^z=c_1 )\\
\le&\sum_{0\le i\le c_1  \wedge (c_2+c_3) } 
{a'}_{1,1}^{c_1-i}{a'}_{1,2}^i
\begin{pmatrix}
c_1\\
i
\end{pmatrix}
a_{2,1}^ia_{2,2}^{c_2+c_3-i}
\begin{pmatrix}
c_2+c_3\\
i
\end{pmatrix}
\end{align*}
and 
\begin{align*}
&\max_{y\in \partial D(z,r_{\beta n})}P^{y}(K(\tau_{K_n},x)+K(\tau_{K_n},x')=c_2+c_3, N_{n,\beta n}^z=c_1 )\\
\le&\sum_{1\le i\le  c_1  \wedge (c_2+c_3)  } 
{a'}_{1,1}^{c_1-i}{a'}_{1,2}^i
\begin{pmatrix}
c_1\\
i-1
\end{pmatrix}
a_{2,1}^{i}a_{2,2}^{c_2+c_3-i}
\begin{pmatrix}
c_2+c_3\\
i
\end{pmatrix}
.
\end{align*}
Let $A$ denote the probability on the left-hand side of (\ref{v1-}).  
Then, the above estimate implies that
\begin{align}\label{v1}
A\le 2(c_1+1)\max_{0\le i\le c_1}
{a'}_{1,1}^{c_1-i}{a'}_{1,2}^i
\begin{pmatrix}
c_1\\
i
\end{pmatrix}
a_{2,1}^{i}a_{2,2}^{c_2+c_3-i}
\begin{pmatrix}
c_2+c_3\\
i
\end{pmatrix}
\end{align}
because, for all sufficiently large $n\in \mathbb{N}$, $c_1\le c_2+c_3$ holds. 
Now, the strong Markov property implies that, for any $i=1$, $2$,
\begin{align*}
a'_{1,i}&\le \max_{y\in \partial D(z,r_{\beta n})}
P^y(T_{\partial D(z, r_{\beta n-1})}<\tau_{K_n})
a_{1,i}. 
\end{align*}
Note that, for any $0\le i\le c_1$,
\begin{align}\label{v2}
{a'}_{1,1}^{c_1-i} {a'}_{1,2}^{i} 
\le (\max_{y\in \partial D(z,r_{\beta n})}
P^y(T_{\partial D(z, r_{\beta n-1})}<\tau_{K_n} ))^{c_1}
a_{1,1}^{c_1-i} a_{1,2}^i .
\end{align}

Now, we combine this last estimate with our assertion. 
For $\gamma\in [0, 1/\sqrt{\alpha}]$ and $\delta>0$, 
we take $c_1$, $c_2$, and $c_3$ such that 
$\gamma^2\le c_1/f(n,\beta n) <\gamma^2+\delta_0$, 
$c_2,c_3 \in [4\alpha (\log K_n)^2/\pi, 4(\log K_n)^2/\pi  ]$. 
Then, similar to (\ref{u1}), (\ref{g2}) implies that  
there exists some $C>0$ such that, for all sufficiently large $n\in{\mathbb{N}}$, 
\begin{align}\label{v3}
 (\max_{y\in \partial D(z,r_{\beta n})}
P^y(T_{\partial D(z, r_{\beta n-1})}<\tau_{K_n} ))^{c_1}
\le CK_n^{-2\alpha F_{0,\beta}(\gamma)+\delta/4}.
\end{align}
Therefore, applying (\ref{v2}) and (\ref{v3}) to (\ref{v1}), we obtain 
\begin{align}\label{v4}
A\le CK_n^{-2\alpha F_{0,\beta}(\gamma)+\delta/4}
\max_{0\le i\le c_1}
a_{1,1}^{c_1-i}a_{1,2}^i
\begin{pmatrix}
c_1\\
i
\end{pmatrix}
a_{2,1}^{i}a_{2,2}^{c_2+c_3-i}
\begin{pmatrix}
c_2+c_3\\
i
\end{pmatrix}
.
\end{align}
Let $A'$ denote the right-hand side of (\ref{v4}). 
We will show later that $A'$ is bounded by 
\begin{align}\label{promise}
CK_n^{-2\alpha F_{2,\beta}(\gamma)+\delta/2}. 
\end{align}
Under this assumption, we have 
\begin{align*}
&\max_{(z,x,x')\in \mathbf{Go}^{h,\beta} }
P\bigg(x,x'\in \tilde{\Psi}_{K_n}(\alpha), 
\gamma^2\le \frac{N_{n,\beta n}^z}{f(n,\beta n)}< (\gamma+\delta_0)^2 \bigg)\\
\le & \sum_{\substack{\gamma^2\le c_1/f(n,\beta n)< (\gamma+\delta_0)^2, 
\\c_2,c_3 \in [4\alpha/\pi (\log K_n)^2, 4/\pi (\log K_n)^2 ]}}
\max_{(z,x,x')\in \mathbf{Go}^{h,\beta} }
P(K(\tau_{K_n},x)=c_2, 
K(\tau_{K_n},x')=c_3, 
N_{n,\beta n}^z=c_1 )\\
\le &CK_n^{-2\alpha F_{2,\beta}(\gamma)+\delta},
\end{align*}
and hence, we obtain the desired result. 

Finally, we show that (\ref{promise}) is indeed true. 
Let us define $g(i)$ by 
\begin{align*}
g(i):=& 
\bigg(1-\frac{2-\epsilon}{\beta n}\bigg)^{{c_1-i}}
\bigg(\frac{2+\epsilon}{\beta n }\bigg)^{i} 
\begin{pmatrix}
c_1\\
i
\end{pmatrix}
\\
&\times
\bigg(\frac{\pi+\epsilon}{6\beta n \log n}\bigg)^i
\bigg(1-\frac{\pi-\epsilon}{6\beta n\log n}\bigg)^{c_2+c_3-i}
\begin{pmatrix}
c_2+c_3\\
i
\end{pmatrix}
.
\end{align*}
Then, from Lemma \ref{d1*} and (\ref{v4}), we have 
$A'\le CK_n^{-2\alpha F_{0,\beta}(\gamma)+\delta/4}\max_{0\le i\le c_1} 
g(i)$. 
Note that, for any $1\le i\le c_1$, 
\begin{align*}
\frac{g(i-1)}{g(i)}
=&\frac{(1-\frac{\pi-\epsilon}{6\beta n\log n})(1-\frac{2-\epsilon}{\beta n})i(i-1)}
{(\frac{\pi+\epsilon}{6\beta n \log n})(\frac{2+\epsilon}{\beta n })(c_2+c_3-i)(c_1+1-i) }.
\end{align*}
By taking the growth order of $c_1$, $c_2$, and $c_3$ into account, 
we attain the maximum of $g(i)$ for $0\le i\le c_1$ at
\begin{align*}
i_0:=\bigg\lceil (1+o(1)) \sqrt{(c_2+c_3)c_1\frac{2+\epsilon}{\beta n }
\times \frac{\pi+\epsilon}{6\beta n \log n}}\bigg\rceil .
\end{align*}
In addition, the Stirling formula implies that, for any $\delta>0$, there exists some $\delta_0>0$ such that, for all sufficiently large $n\in \mathbb{N}$ and small  $\epsilon > 0$, 
\begin{align*}
g(i_0)
\le & 
K_n^{\delta/11}\bigg(1-\frac{2-\epsilon}{\beta n}\bigg)^{{c_1-i_0}}
\bigg(\frac{2+\epsilon}{\beta n }\bigg)^{i_0} 
c_1^{i_0}\bigg(1-\frac{i_0}{c_1}\bigg)^{-c_1}i_0^{-i_0}\\
&\times
\bigg(\frac{\pi+\epsilon}{6\beta n \log n}\bigg)^{i_0}
\bigg(1-\frac{\pi-\epsilon}{6\beta n\log n}\bigg)^{c_2+c_3-i_0}
(c_2+c_3)^{i_0}\bigg(1-\frac{i_0}{c_2+c_3}\bigg)^{-c_2-c_3}i_0^{-i_0}\\
\le & 
K_n^{\delta/10}\bigg(1-\frac{2-\epsilon}{\beta n}\bigg)^{{c_1-i_0}}
\bigg(1-\frac{i_0}{c_1}\bigg)^{-c_1}
\bigg(1-\frac{\pi-\epsilon}{6\beta n\log n}\bigg)^{c_2+c_3-i_0}
\bigg(1-\frac{i_0}{c_2+c_3}\bigg)^{-c_2-c_3}\\
\le & 
K_n^{\delta/9}
e^{2i_0}
e^{-2c_1/(\beta n)}
e^{-\pi (c_2+c_3)/(6\beta n \log n)}\\
\le & 
K_n^{\delta/8}
K_n^{2\sqrt{\pi(c_2+c_3)}\gamma(1-\beta)  /(\beta\log K_n) }
K_n^{-4\alpha \gamma^2(1-\beta)^2/\beta }
K_n^{-\pi (c_1+c_2)/(\beta(\log K_n)^2)}.
\end{align*}
The final term on the right-hand side is bounded by
\begin{align*}
K_n^{-4\alpha  (\sqrt{(c'_2+c'_3)/2}-\gamma(1-\beta))^2/\beta+\delta/8}
\le K_n^{-4\alpha (1-\gamma(1-\beta))^2/\beta +\delta/4},
\end{align*}
where $c'_2=c_2\pi/(4\alpha(\log K_n)^2)$ and $c'_3=c_3\pi/(4\alpha(\log K_n)^2)$. 
Therefore, we have that $A'$ is bounded by
\begin{align*}
&CK_n^{-2\alpha F_{0,\beta}(\gamma)+\delta/4} 
 K_n^{-4\alpha (1-\gamma(1-\beta))^2 /\beta+\delta/4}\\
=&CK_n^{-2\alpha F_{2,\beta}(\gamma)+\delta/2},
\end{align*}
and hence, we arrive at the desired result. 
\end{proof}

\section{Proof of Theorem \ref{h1}}\label{Th h1}
In this section, we prove Theorem \ref{h1}. 
Using the notation introduced at the beginning of Section \ref{key*}, we first prove the upper bound of Theorem \ref{h1}, and then confirm the lower bound. 
\subsection{Proof of the upper bound in Theorem \ref{h1}}\label{upp Th h1}
In this section, our goal is to show that
\begin{align*}
\limsup_{n\to \infty}\frac{\log 
|\{ (x,x')\in \Psi_n(\alpha)^2:d(x,x')\le n^{\beta} \}|}{\log n}
\le \rho_2(\alpha, \beta)
\quad \text{a.s.}
\end{align*}
It suffices to prove the following proposition. 
\begin{prop}\label{propq1}
For any $0<\alpha, \beta,\delta<1$, 
there exist $C>0$ and $\epsilon>0$ such that, 
for any $n\in \mathbb{N}$,
\begin{align*}
P(|\tilde{\Theta}_{\alpha,0,\beta,n}|
\ge K_n^{\rho_2(\alpha,\beta)+5\delta})
\le CK_n^{-\epsilon},
\end{align*}
where
\begin{align*}
\tilde{\Theta}_{\alpha, \beta_2, \beta_1,n}
:=\{ (x,x')\in \tilde{\Psi}_{K_n}(\alpha)^2: r_{(\beta_2n-3) \vee 0}  \le d(x,x')\le r_{(\beta_1n-3)\vee 0} \}.
\end{align*}
\end{prop}
We first present the proof of   the upper bound in Theorem \ref{h1} with the aid of Proposition \ref{propq1}. 
\begin{proof}[Proof of the upper bound in Theorem \ref{h1}]
If Proposition \ref{propq1} holds, 
the Borel-Cantelli lemma produces 
\begin{align}\label{vv0*}
|\{ (x,x')\in {\tilde{\Psi}}_{K_{l+1}}(\alpha)^2 :
 d(x,x')\le  r_{\beta l-3}  \}|
 \le K_n^{\rho_2(\alpha,\beta)+5\delta}
 \quad\text{ a.s.}
\end{align} 
Note that  \cite[Theorem $1.1$]{Dembo} ensures that, for all sufficiently large $n\in \mathbb{N}$,
\begin{align}\label{vv}
| \Psi_{K_n}(\alpha)\setminus \tilde{\Psi}_{K_n}(\alpha) |\le K_n^{\delta}\quad\text{ a.s.}
\end{align}
Again, by \cite[Theorem $1.1$]{Dembo},  we have  
$|\Psi_{K_n}(\alpha)|\le K_n^{2(1-\alpha)+\delta}$ 
for all sufficiently large $n\in \mathbb{N}$ a.s. Hence, (\ref{jjt}) gives 
\begin{align}\label{vv*}
|\Psi_{K_n}(\alpha)|\le K_n^{\rho_2(\alpha, \beta)+\delta}\quad\text{ a.s.} 
\end{align}
Then,  by (\ref{vv}) and (\ref{vv*}), for all sufficiently large $n\in \mathbb{N}$, we obtain
\begin{align}\label{vv1*}
|\{ (x,x'):x\in \Psi_{K_n}(\alpha),
x'\in  \Psi_{K_n}(\alpha)\setminus \tilde{\Psi}_{K_n}(\alpha),
 d(x,x')\le r_{\beta n-3}  \}|\le  K_n^{\rho_2(\alpha, \beta)+2\delta}
\quad\text{ a.s.}
\end{align}\notag
Hence, by (\ref{vv0*}), (\ref{vv}), and (\ref{vv1*}), we have 
\begin{align*}
&|\{ (x,x')\in \Psi_{K_n}(\alpha)^2:
 d(x,x')\le r_{\beta n-3}  \}|\\
\le 
&|\{ (x,x')\in \tilde{\Psi}_{K_n}(\alpha)^2:
 d(x,x')\le r_{\beta n-3}  \}|\\
+&2|\{ (x,x'):x\in \Psi_{K_n}(\alpha),
x'\in  \Psi_{K_n}(\alpha)\setminus \tilde{\Psi}_{K_n}(\alpha),
 d(x,x')\le r_{\beta n-3}  \}|\\
 +&|\{ (x,x')\in   (\Psi_{K_n}(\alpha)\setminus \tilde{\Psi}_{K_n}(\alpha))^2: 
 d(x,x')\le r_{\beta n-3}  \}|
 \le  K_n^{\rho_2(\alpha, \beta)+6\delta}
\quad\text{ a.s.}
\end{align*}
Note that $ \log r_{\beta n-3}/\log K_{n+1} \to \beta$ as $n\to \infty$. 
Then, 
for any $0<\epsilon<\beta$ and all sufficiently large $n\in \mathbb{N}$, we have that
\begin{align*}
|\{ (x,x')\in \Psi_{n}(\alpha)^2 :
 d(x,x')\le n^{\beta-\epsilon}  \}|
 \le
& |\{ (x,x')\in \Psi_{K_{l+1}}(\alpha)^2 :
 d(x,x')\le  K_{l+1}^{\beta-\epsilon} \}|\\
  \le
& |\{ (x,x')\in \Psi_{K_{l+1}}(\alpha)^2 :
 d(x,x')\le  r_{\beta l-3}  \}|\\
 \le &K_{l+1}^{\rho_2(\alpha, \beta)+6\delta}
 \le  n^{\rho_2(\alpha, \beta)+7\delta}
\quad\text{ a.s.}
\end{align*}
if we select $l\in \mathbb{N}$ such that $K_l\le n <K_{l+1}$. 
Again, note that $\beta\mapsto \rho_2(\alpha, \beta)$ is continuous. 
Therefore, we obtain the desired result. 
\end{proof}

To show that Proposition \ref{propq1} holds, we divide the assertion into two corresponding estimates for 
$\tilde{\Theta}_{\alpha, 0, \beta(1-\alpha),n}$ and 
$\tilde{\Theta}_{\alpha, \beta(1-\alpha),\beta, n}$, 
and introduce the following two lemmas. 
For the former case, we present the following lemma. 
\begin{lem}\label{hhr0}
There exist $C>0$ and $\epsilon>0$ such that, for any $n\in \mathbb{N}$,
\begin{align*}
P(|\tilde{\Theta}_{\alpha, 0, \beta(1-\alpha),n}|
 \ge K_n^{\rho_2(\alpha,\beta)+4\delta})
\le CK_n^{-\epsilon}.
\end{align*}
\end{lem}
For the latter, we use the following lemma. 
\begin{lem}\label{hh}
There exist $h<2$, $C>0$, and $\epsilon>0$ such that, for any $\beta' \in [\beta (1-\alpha),\beta]$ and $n\in \mathbb{N}$,
\begin{align*}
q_{n,\beta'}:=
P(|\tilde{\Theta}_{\alpha, \beta' h/2, \beta',n}|
 \ge K_n^{\rho_2(\alpha,\beta')+3\delta})
\le CK_n^{-\epsilon}.
\end{align*}
\end{lem}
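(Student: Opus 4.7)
The plan is to apply Markov's inequality and thereby reduce to a first-moment estimate: it suffices to show
\[
E[|\tilde{\Theta}_{\alpha, \beta' h/2, \beta', n}|] \le C K_n^{\rho_2(\alpha, \beta') + 3\delta},
\]
since then $q_{n, \beta'} \le C K_n^{-\delta}$. Using $|\{(x, x') : d(x, x') \le r_{\beta' n - 3}\}| \le C K_n^{2 + 2\beta'}$ up to subpolynomial factors, together with the identity
\[
\rho_2(\alpha, \beta') = 2 + 2\beta' - 2\alpha F_{2, \beta'}(\gamma_0), \qquad \gamma_0 := \min\bigl\{1/\sqrt{\alpha},\, 2/(2-\beta')\bigr\},
\]
the task reduces to the uniform pointwise bound
\[
P(x, x' \in \tilde{\Psi}_{K_n}(\alpha)) \le C K_n^{-2\alpha F_{2, \beta'}(\gamma_0) + \delta}
\]
for every admissible pair.

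For each such pair I select a center $z \in D(0, K_n)$ (for instance $z = x$) so that $(z, x, x') \in \mathbf{Go}^{h, \beta'}$; the lower distance bound $d(x, x') \ge r_{\beta' h n/2 - 3}$ baked into $\tilde{\Theta}_{\alpha, \beta' h/2, \beta', n}$ is precisely what guarantees this inclusion. I then decompose the event $\{x, x' \in \tilde{\Psi}_{K_n}(\alpha)\}$ according to the normalized excursion count $N := N_{n, \beta' n}^z / n_{\beta' n}$. On the bounded-excursion event $\{N \le (1/\sqrt{\alpha} + \delta_0)^2\}$, Corollary \ref{u3+} immediately delivers the required bound $C K_n^{-2\alpha F_{2, \beta'}(\gamma_0) + 2\delta}$, so everything rests on controlling the complementary tail.

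The tail is the main obstacle. A blunt application of Lemma \ref{uu0}(1) alone yields only exponent $2(1-\beta') + O(\delta_0)$, which can fall short of $2\alpha F_{2, \beta'}(\gamma_0)$ in some regimes, particularly in the window $\beta' > 2(1-\sqrt{\alpha})$ where $\gamma_0 = 1/\sqrt{\alpha}$ is the boundary of $\Gamma_{\alpha, \beta'}$ rather than the unconstrained minimizer of $F_{2, \beta'}$. To close the gap I discretize the tail into bins $[\gamma_j^2, \gamma_{j+1}^2)$ of mesh $\delta_0$ and reapply the proof of Lemma \ref{uu0}(2) bin by bin, retaining the upper local-time constraint $c_2', c_3' \le 1/\alpha$ built into $\tilde{\Psi}_{K_n}(\alpha)$ (which is the probabilistic manifestation of the constraint $\alpha \gamma_j^2 \le 1$ defining $\Gamma_{\alpha, \beta'}$). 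The argument extends to all $\gamma_j$ with $\gamma_j(1-\beta') \le 1$, since this is the only hypothesis used in the final comparison $(\sqrt{(c_2' + c_3')/2} - \gamma_j(1-\beta'))^2 \ge (1 - \gamma_j(1-\beta'))^2$, and retaining the upper constraint furnishes the additional decay needed to match the target exponent in the problematic intermediate range. For $\gamma_j > 1/(1-\beta')$, Lemma \ref{uu0}(1) alone suffices, since $2\alpha \gamma_j^2 (1-\beta') > 2\alpha/(1-\beta') > 2\alpha F_{2, \beta'}(\gamma_0)$. Summing the $O(\delta_0^{-1})$ bin bounds and multiplying by the pair count $K_n^{2+2\beta'}$ completes the first-moment estimate, and Markov's inequality concludes.
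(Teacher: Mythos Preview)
Your approach has a genuine gap in the regime $\beta' > 2(1-\sqrt{\alpha})$, where $\gamma_0 = 1/\sqrt{\alpha}$ lies strictly to the left of the unconstrained minimizer $2/(2-\beta')$ of $F_{2,\beta'}$. In that regime the first moment $E[|\tilde{\Theta}_{\alpha,\beta'h/2,\beta',n}|]$ is \emph{not} bounded by $K_n^{\rho_2(\alpha,\beta')+3\delta}$: the per-pair probability $P(x,x'\in\tilde{\Psi}_{K_n}(\alpha))$ is of order $K_n^{-4\alpha/(2-\beta')+o(1)}$ (Section~4.2), and $4\alpha/(2-\beta') = 2\alpha F_{2,\beta'}(2/(2-\beta')) < 2\alpha F_{2,\beta'}(1/\sqrt{\alpha})$. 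So Markov's inequality applied to the whole set cannot yield the exponent $\rho_2(\alpha,\beta')$; this is exactly the discrepancy $\hat{\rho}_2>\rho_2$. Your proposed fix---retaining the upper constraint $c_2',c_3'\le 1/\alpha$ when rerunning the proof of Lemma~\ref{uu0}(2)---does not help: for $\gamma$ in the window $(1/\sqrt{\alpha},\,2/(2-\beta'))$ one still has $\gamma(1-\beta')<1$, so the minimum of $(\sqrt{(c_2'+c_3')/2}-\gamma(1-\beta'))^2$ over $c_2',c_3'\in[1,1/\alpha]$ is attained at the \emph{lower} endpoint $c_2'=c_3'=1$, giving back the same exponent $2\alpha F_{2,\beta'}(\gamma)<2\alpha F_{2,\beta'}(\gamma_0)$. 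The analogy you draw between $c_2',c_3'\le 1/\alpha$ and the constraint $\alpha\gamma^2\le 1$ defining $\Gamma_{\alpha,\beta'}$ is misleading: the former bounds local times, the latter bounds excursion counts, and the two do not interact in the way you need.

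The paper avoids this by splitting \emph{before} taking expectations. It introduces a coarse grid $\hat{Z}_{n,\beta'}\subset D(0,K_n)$ of only $|\hat{Z}_{n,\beta'}|\asymp K_n^{2-2\beta'}$ centers (rather than $K_n^{2+2\beta'}$ pairs) and writes $q_{n,\beta'}\le I_1+I_2$, where $I_1=P\bigl(\max_{z\in\hat{Z}_{n,\beta'}} N^z_{n,\beta'n}/n_{\beta'n}\ge(1/\sqrt{\alpha}+\delta_0)^2\bigr)$ and $I_2$ is the probability of the original event intersected with the complement. For $I_2$ one restricts every pair to bounded excursions and then applies Markov together with Corollary~\ref{u3+}, exactly as you do on your ``bounded'' piece. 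For $I_1$ the point is that a union bound over the coarse grid gives $I_1\le |\hat{Z}_{n,\beta'}|\cdot K_n^{-2\alpha F_{0,\beta'}(1/\sqrt{\alpha}+\delta_0)+o(1)}\le K_n^{2-2\beta'-2(1-\beta')-\epsilon}=K_n^{-\epsilon}$; this works precisely because one pays $K_n^{2-2\beta'}$ rather than $K_n^{2+2\beta'}$ for the union. Pulling the excursion-tail event outside the expectation and handling it on a sparse grid is the missing idea in your argument.
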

We now prove Proposition \ref{propq1} using Lemmas \ref{hhr0} and \ref{hh}. 
\begin{proof}[Proof of Proposition \ref{propq1}]
Set $\beta_j=\beta(h/2)^j$ and  $l=\min \{j: \beta_j\le \beta(1-\alpha) \}$. 
Combining this with the monotonicity of $\beta \mapsto \rho_2(\alpha,\beta)$ and 
the fact that $K_n^{\rho_2(\alpha,\beta)+4\delta} \ge l K_n^{\rho_2(\alpha,\beta)+3\delta}$ for all sufficiently large $n\in \mathbb{N}$, 
the union bound suggests that 
there exist $C>0$ and $\epsilon>0$ such that, for any $n\in \mathbb{N}$,
\begin{align*}
P(|\tilde{\Theta}_{\alpha, \beta(1-\alpha),\beta, n}|
 \ge K_n^{\rho_2(\alpha,\beta)+4\delta})
\le CK_n^{-\epsilon}.
\end{align*}
As $K_n^{\rho_2(\alpha,\beta)+5\delta} \ge 2K_n^{\rho_2(\alpha,\beta)+4\delta}$ for all sufficiently large $n\in \mathbb{N}$,  
with the aid of Lemma \ref{hhr0}, we have the union bound, and hence, Proposition \ref{propq1}. 
\end{proof}
Finally, we provide the proofs of Lemmas \ref{hhr0} and \ref{hh}. 
\begin{proof}[Proof of Lemma \ref{hhr0}]
Note that 
\begin{align*}
|\tilde{\Theta}_{\alpha, 0, \beta(1-\alpha),n}|
 \le 4r_{\beta(1-\alpha) n-3}^2|\tilde{\Psi}_{K_n}(\alpha)|
 \le K_n^{2\beta(1-\alpha)+\delta}|\Psi_{K_n}(\alpha)|.
\end{align*}
In addition, (\ref{jjt}) yields
\begin{align*}
P(|\tilde{\Theta}_{\alpha, 0, \beta(1-\alpha),n}|
 \ge K_n^{\rho_2(\alpha,\beta)+4\delta})
\le
P(|\tilde{\Theta}_{\alpha, 0, \beta(1-\alpha),n}|
 \ge K_n^{2(1-\alpha)+2\beta(1-\alpha) +4\delta}). 
\end{align*}
Hence, by  \cite[($2.18$)]{rosen},
the right-hand side in the last formula  is bounded by 
\begin{align*}
P(|\tilde{\Psi}_{K_n}(\alpha)|
 \ge K_n^{2(1-\alpha)+3\delta})
\le
P(|\Psi_{K_n}(\alpha)|
 \ge K_n^{2(1-\alpha)+3\delta})
\le CK_n^{-\epsilon},
\end{align*}
and hence,  we obtain the desired result. 
\end{proof}
\begin{proof}[Proof of Lemma \ref{hh}]
Let $\hat{Z}_{n,\beta'}:=4r_{\beta' n-4}\mathbb{Z}^2 \cap D(0,K_n)$ and 
$z_{\beta'}(x)$ be the point closest to $x$ in $\hat{Z}_{n,\beta'}$. 
Fix $\delta>0$ and select $\delta_0>0$ as in Corollary \ref{u3+}. 
By a simple argument, 
\begin{align*}
q_{n,\beta'}
\le
I_1+I_2,
\end{align*}
where 
\begin{align*}
I_1&:=P\bigg( \max_{z\in\hat{Z}_{n,\beta'} }
\frac{ N_{n,\beta' n}^{z}}{n_{\beta' n}} 
\ge \bigg(\frac{1}{\sqrt{\alpha} }+\delta_0\bigg)^2\bigg),\\
I_2&:=P\bigg(|\tilde{\Theta}_{\alpha, \beta' h/2, \beta',n}|
 \ge K_n^{\rho_2(\alpha,\beta')+3\delta}
; \max_{z\in \hat{Z}_{n,\beta'}}\frac{N_{n,\beta' n}^{z}}{n_{\beta' n}}
< \bigg(\frac{1}{\sqrt{\alpha} }+\delta_0\bigg)^2\bigg).
\end{align*}
Then, 
\begin{align*}
I_2\le &P\bigg(\bigg|\bigg\{ (x,x')\in \tilde{\Psi}_{K_n}(\alpha)^2: r_{\beta' hn/2-3}  \le d(x,x')\le r_{\beta' n-3}, \frac{N_{n,\beta'n}^{z_{\beta'}(x)} }{n_{\beta' n} } 
< \bigg(\frac{1}{\sqrt{\alpha} }+\delta_0\bigg)^2 \bigg\}\bigg|
 \ge K_n^{\rho_2(\alpha,\beta')+3\delta} \bigg)\\
\le &K_n^{-\rho_2(\alpha,\beta')-3\delta} 
E\bigg[\bigg|\{ (x,x')\in \tilde{\Psi}_{K_n}(\alpha)^2: r_{\beta' hn/2-3}  \le d(x,x')\le r_{\beta' n-3}, 
\frac{N_{n,\beta'n}^{z_{\beta'}(x)}}{n_{\beta' n}}< \bigg(\frac{1}{\sqrt{\alpha} }+\delta_0\bigg)^2
\bigg\}\bigg|\bigg]\\
= &K_n^{-\rho_2(\alpha,\beta')-3\delta} 
\sum_{\substack{x\in D(0,K_n), \\x'\in D(x,r_{\beta' n-3})\cap D(x,r_{\beta'h n/2-3})^c}}
P\bigg(x,x'\in \tilde{\Psi}_{K_n}(\alpha); 
 \frac{N_{n,\beta'n}^{z_{\beta'}(x)}}{n_{\beta' n}}< \bigg(\frac{1}{\sqrt{\alpha} }+\delta_0\bigg)^2 \bigg)\\
\le &CK_n^{-2\alpha F_{2,\beta'}(\min\{1/\sqrt{\alpha},2/(2-\beta')\} )-2\delta} \\
&\times \max_{\substack{x\in D(0,K_n), \\x'\in D(x,r_{\beta' n-3})\cap D(x,r_{\beta'h n/2-3})^c}}
P\bigg(x,x'\in \tilde{\Psi}_{K_n}(\alpha); 
\frac{N_{n,\beta'n}^{z_{\beta'}(x)}}{n_{\beta' n}}< \bigg(\frac{1}{\sqrt{\alpha} }+\delta_0\bigg)^2 \bigg)\\
\le &CK_n^{-\delta}.
\end{align*}
The last inequality comes from Corollary \ref{u3+}. 
Set $\epsilon:=\alpha F_{0,\beta'}( 1/\sqrt{\alpha} +\delta_0 )-(1-\beta')$, 
and note that $\epsilon>0$. 
Then, there exists some $C>0$ such that
\begin{align*}
I_1
\le&
|\hat{Z}_{n,\beta'}|
CK_n^{-2\alpha F_{0,\beta'}( 1/\sqrt{\alpha} +\delta_0 )+\epsilon/2}\\
\le &
CK_n^{2-2\beta'-2\alpha F_{0,\beta'}( 1/\sqrt{\alpha} +\delta_0 )+\epsilon}
\le CK_n^{-\epsilon}.
\end{align*}
This completes the proof of Lemma \ref{hh}. 
\end{proof}


\subsection{Proof of the lower bound in Theorem \ref{h1}}\label{low Th h1}
In this section, we show that 
\begin{align*}
\liminf_{n\to \infty}\frac{\log 
|\{ (x,x')\in \Psi_n(\alpha)^2:d(x,x')\le n^{\beta} \}|}{\log n}
\ge \rho_2(\alpha, \beta)
\quad \text{a.s.}
\end{align*}
To prove the lower bound in Theorem \ref{h1}, let $r_{n,k}:=r_n/r_k$ and 
\begin{align*}
\Theta_{\alpha, \beta, n,n'}
:=\bigg\{ (x,x')\in \Psi_{K_{n'}}(\alpha)^2: d(x,x')\le \frac{r_{n,(1-\beta )n}}{2} \bigg\}.
\end{align*}
Fix $\delta>0$, $0<\alpha,\beta<1$, and $\gamma\ge0$ with $\delta<\alpha$ and 
\begin{align}\label{ffh}
2-2\beta-2\alpha F_{0,\beta}(\gamma)>2\delta. 
\end{align}
Our principal goal in this section is to prove the following proposition. 
\begin{prop}\label{propq2}
There exist $\epsilon>0$ and $C>0$ such that, for any $n\in \mathbb{N}$, 
\begin{align*}
P(|\Theta_{\alpha-\delta,\beta,n,n+1}| \le K_n^{2+2\beta-2\alpha F_{2,\beta}(\gamma)-5\delta})\le Ce^{-\epsilon n}.
\end{align*}
\end{prop}
We first present the proof of   the lower bound in Theorem \ref{h1} using Proposition \ref{propq2}. 
\begin{proof}[Proof of the lower bound in Theorem \ref{h1}]
Under Proposition \ref{propq2}, 
the Borel-Cantelli lemma provides the lower bound in Theorem \ref{h1} for the sequence $(K_n)_{n\ge 1}$.  
In addition, note that  $\log r_{n,(1-\beta )n} / \log K_{n+1} \to \beta$ as $n\to \infty$. 
We have that, for any $0<\epsilon<\beta$ and all sufficiently large $n\in \mathbb{N}$,
\begin{align*}
|\{ (x,x')\in \Psi_{n}(\alpha)^2 :
 d(x,x')\le n^{\beta-2\delta}  \}|
 \ge
& |\{ (x,x')\in \Psi_{K_{l+1}}(\alpha)^2 :
 d(x,x')\le  K_{l+1}^{\beta-\delta} \}|\\
  \ge
& |\{ (x,x')\in \Psi_{K_{l+1}}(\alpha)^2 :
 d(x,x')\le  r_{l,(1-\beta+\delta )l}   \}|\\
 \ge &K_l^{\rho_2(\alpha+\delta, \beta-\delta)-5\delta}
 \ge  n^{\rho_2(\alpha+\delta, \beta-\delta)-6\delta}
\quad\text{ a.s.}
\end{align*}
if we select $l\in \mathbb{N}$ such that $K_{l+1}\le n <K_{l+2}$. 
As $(\alpha, \beta)\mapsto \rho_2(\alpha, \beta)$ is continuous, we have the desired result. 
\end{proof}
To show that Proposition \ref{propq2} holds, we first provide the following notation and intuitive description. 
Let $\mathcal{A}_n$ be a maximal set of points in $D(0,4r_n)\setminus D(0,3r_n)$, 
where any two points are separated by a distance of greater than $4r_{n,(1-\beta )n-4}$. 
The primary idea of the proof of Proposition \ref{propq2} is to consider 
$\Theta_{\alpha-\delta, \beta, n,n+1}\cap D(z,r_{n,(1-\beta)n}/2)^2$ 
for each $z\in \mathcal{A}_n$, instead of the entire  
set of $\Theta_{\alpha-\delta, \beta, n,n+1}$, and to count the number of elements in these sets. 
By the choice of the radius, 
if both $x$, $x'\in D(z,r_{n,(1-\beta)n}/2)$ are ($\alpha-\delta$)-favorite points, then 
$(x,x') \in \Theta_{\alpha-\delta, \beta, n,n+1}$ automatically.  

As the secondary idea, we restrict any random sets to smaller ones  to decrease the variance of the numbers of elements in those sets.  
The restriction controls the number of visits to $D(z,r_{n,(1-\beta)n}/2)$ according to the number of excursions passing through annuli around $D(z,r_{n,(1-\beta)n}/2)$. 

For $z\in \mathcal{A}_n$ 
and $1\le l \le (1-\beta) n$, 
let $\hat{N}_{n,l}^z$ 
be the number of excursions from $\partial D(z, r_{n,l})$ to $\partial D(z,r_{n,l-1})$ up to time $T_{\partial D(z,r_n)}$. 
Let $\hat{\mathcal{R}}_{k,m}^z=\hat{\mathcal{R}}_{k}^z(m)$ be the time required for the first 
$m$ excursions from $\partial D(z, r_{n,k})$ to $\partial D(z, r_{n,k-1})$. 
Let 
\begin{align*}
\tilde{W}_{m}^z=\tilde{W}^z(m)
:=\bigg|\bigg\{y\in D\bigg(z,\frac{r_{n,(1-\beta)n}}{2}\bigg): K(m, y)
\ge \frac{4(\alpha-\delta)}{\pi}(\log K_n)^2 \bigg\}\bigg|
\end{align*}
and
\begin{align*}
 \hat{H}_{k,m}^z:=
\{\tilde{W}_{\hat{\mathcal{R}}_{k,m}^z}^z  \ge K_n^{2\beta- 2\alpha (1-\gamma(1-\beta))^2/\beta-2\delta} \}.
\end{align*}
In particular, we let
\begin{align}\label{ju}
 H_k^z:=
\{\tilde{W}^z(\hat{\mathcal{R}}^z_{k}(\hat{N}^z_{n,k}))  
\ge K_n^{2\beta- 2\alpha(1-\gamma(1-\beta))^2/\beta -2\delta} \}.
\end{align}
The event $H^z_k$ describes the situation in which sufficiently many ($\alpha-\delta$)-favorite points exist in a neighborhood of $z$ when all visits to the neighborhood have finished before leaving $D(z,r_n)$.  
Set $\tilde{f}(k):=6\gamma^2\alpha k^2\log k$. 
We say that a point $z \in \mathcal{A}_n$ is $(n,\beta)$-successful  
if $|\hat{N}_{n,k}^z - \tilde{f}(k)|\le k$ 
for all $3\le k\le (1-\beta) n$ and 
$H_{n-\beta n}^z$ occurs. 
That is, the first condition restricts the number of excursions to a ``typical" value. 
This definition comes from a fact stated in \cite{Dembo1},  
although this definition is slightly different. 

Using the notion of successful points, we can reduce our problem to that for the number of successful points. 
We use the following lemma and show that Proposition \ref{propq2} holds under the relevant assumption. 
\begin{lem}\label{r1+}
There exists some $C\in(0,1)$ such that,   for $\gamma\in \Gamma_{\alpha,\beta}$,
\begin{align*}
P(|\{z\in \mathcal{A}_n:  z\text{ is }(n,\beta)\text{-successful} \}|
\ge 
 K_n^{2(1-\beta)- 2\gamma^2\alpha(1-\beta)-\delta})\ge C.
\end{align*}
\end{lem}
\begin{proof}[Proof of Proposition \ref{propq2}]
By definition,
\begin{align}
\notag
|\Theta_{\alpha-\delta,\beta,n,n}|
\ge&
\sum_{z\in \mathcal{A}_n} \tilde{W}^z(\hat{\mathcal{R}}_{n-\beta n}^z(\hat{N}^z_{n,(1-\beta)n}))^2\\
\label{tg}
\ge& 
|\{z\in \mathcal{A}_n:  z\text{ is }(n,\beta)\text{-successful} \}|
 K_n^{4\beta- 4\alpha (1-\gamma(1-\beta))^2/\beta-4\delta}.
\end{align}
Then, Lemma \ref{r1+} yields
\begin{align*}
P(|\Theta_{\alpha-\delta,\beta,n,n}| \le K_n^{2+2\beta-2\alpha F_{2,\beta}(\gamma)-5\delta})\le 1-C.
\end{align*}
As we have that $K_{n+1}/K_n\ge n^3$ for $n\in \mathbb{N}$, 
the same argument used for \cite[$(3.4)$]{rosen} gives
\begin{align*}
P(|\Theta_{\alpha-\delta,\beta,n,n+1}| \le K_n^{2+2\beta-2\alpha F_{2,\beta}(\gamma)-5\delta})
\le P(|\Theta_{\alpha-\delta,\beta,n,n}| \le K_n^{2+2\beta-2\alpha F_{2,\beta}(\gamma)-5\delta})^{n^3/2},
\end{align*}
and hence, we obtain the desired result. 
\end{proof}
\begin{rem}
We now explain  why Theorems \ref{h1} and \ref{h2} result in different exponents. 
In fact,  for $\gamma \in \Gamma_{\alpha,\beta}$, as $n\to \infty$,
\begin{align*}
P(|\{z\in \mathcal{A}_n:  z\text{ is }(n,\beta)\text{-successful} \}|
\ge 1)\to0.
\end{align*}
Then, (\ref{tg}) does not hold for $\gamma \not\in \Gamma_{\alpha,\beta}$. 
Hence, we need to restrict the domain of $\gamma$.  
In contrast, as for Theorem \ref{h2}, (\ref{tg}) implies that, for any $\delta>0$, $\gamma\ge 0$, and all sufficiently large $n\in \mathbb{N}$,
\begin{align*}
&E[|\{ (x,x')\in \Psi_n(\alpha)^2:d(x,x')\le K_n^{\beta} \}|]\\
\ge& E |\Theta_{\alpha-\delta,\beta,n,n}|\\
\ge &
E|\{z\in \mathcal{A}_n:  z\text{ is }(n,\beta)\text{-successful} \}|
 \times K_n^{4\beta- 4\alpha (1-\gamma(1-\beta))^2/\beta-4\delta}.
\end{align*}
As we do not need to restrict the domain of $\gamma$ in the final term on the right-hand side of the last formula, 
we have the difference. 
\end{rem}
Thus, it suffices to show Lemma \ref{r1+} to conclude the proof of the lower bound in Theorem \ref{h1}. 
To show Lemma \ref{r1+}, we divide the major part of the problem into an estimate of the probability of the event $H^z_{(1-\beta)n}$ and the effect of restricting the number of excursions according to the definition of a successful point. 
Lemma \ref{r1} is concerned with the former. 
The latter is treated in Lemma \ref{r2} in combination with Lemma \ref{r1}.  
\begin{lem}\label{r1}
As $n\to \infty$,
\begin{align*}
 P(\hat{H}_{n-\beta n,  \tilde{f}(1-\beta) n)-(1-\beta) n}^z)=1-o(1)
\end{align*}
uniformly in $z\in \mathcal{A}_n$. 
\end{lem}
\begin{lem}\label{r2}
There exists some $\delta_n> 0$ with $\lim_{n \to \infty} \delta_n=0$ such that
\begin{align}\label{qe1}
\overline{q}_n 
:=\inf_{x\in \mathcal{A}_n } 
P(x \text{ is } (n,\beta)\text{-successful})
\ge r_{(1-\beta )n}^{-(2\gamma^2\alpha+\delta_n)}.
\end{align}
Further, for $x,y \in D(z,r_{n,(1-\beta) n}/2)$, set
\begin{align*}
l(x,y):=\min \{l: D(x,r_{n,l}+1) \cap D(y,r_{n,l}+1)=\emptyset \} \wedge n.
\end{align*}
Then, there exists some $c>0$ such that, for all $x \neq y \in  \mathcal{A}_n$,
\begin{align}\label{qe2}
P(x \text{ and }y \text{ are } (n,\beta)\text{-successful})
\le c\overline{q}_n^2 r_{l(x,y)}^{2\gamma^2\alpha+\delta_{l(x,y)}}.
\end{align}
\end{lem}
We now show that Lemma \ref{r1+} holds using Lemma \ref{r2}. 
Lemma \ref{r1} is used in the proof of  Lemma \ref{r2}. 
\begin{proof}[Proof of  Lemma \ref{r1+}]
By the same argument as in \cite[$(2.11)$]{Dembo1}, 
there exists some $C>0$ such that 
 \begin{align*}
E[|\{z\in \mathcal{A}_n:  z\text{ is }(n,\beta)\text{-successful} \}|^2]
\le
CE[|\{z\in \mathcal{A}_n:  z\text{ is }(n,\beta)\text{-successful} \}|]^2,
\end{align*}
where we have used Lemma \ref{r2} instead of \cite[Lemma $2.1$]{Dembo1}. 
By the Paley--Zygmund inequality, we have the desired result.
\end{proof}

Thus, to conclude the proof of the lower bound, it suffices to show that Lemma \ref{r2} holds with the aid of Lemma \ref{r1}.  
Hence, we now prove Lemmas \ref{r1} and \ref{r2}. 
To prove Lemma \ref{r1}, we first consider three propositions, 
which require the following notation.  
For $z\in \mathcal{A}_n$, $x \in D(z,r_{n, (1-\beta)n}/2)$, and $(1-\beta) n+2\le l \le n$, 
$\hat{N}_{n,l}^{z,x}$ denotes the number of excursions from $\partial D(x, r_{n,l})$ to $\partial D(x,r_{n,l-1})$ up to time 
$\hat{\mathcal{R}}^z_{(1-\beta) n, \tilde{f}((1-\beta) n)-(1-\beta) n}$. 
For $(1-\beta) n \le l\le n$ and $\gamma<1/(1-\beta) $, set
\begin{align*}
\hat{f}(n,l):=
6\alpha \bigg( \frac{l-(1-\beta) n}{\beta}+\bigg(n-\frac{l-(1-\beta) n}{\beta}\bigg)\gamma(1-\beta) \bigg)^2\log l.
\end{align*}
For $z\in \mathcal{A}_n$ and $\eta\in \mathbb{N}\setminus \{1\}$, we say that $x\in D(z,r_{n,(1-\beta) n}/2)$ is $(n,\beta)$-qualified if 
\begin{align*}
|\hat{N}_{n,l}^{z,x} -\hat{f}(n,l) |\le n 
\quad\text{ for }(1-\beta)n+\eta \le l\le n.
\end{align*}
Now, we consider $n\in \mathbb{N}$ with $(1-\beta)n\ge \eta/2$ (see Figure 3).
\begin{figure}[h]
\centering
\includegraphics[width=130mm]{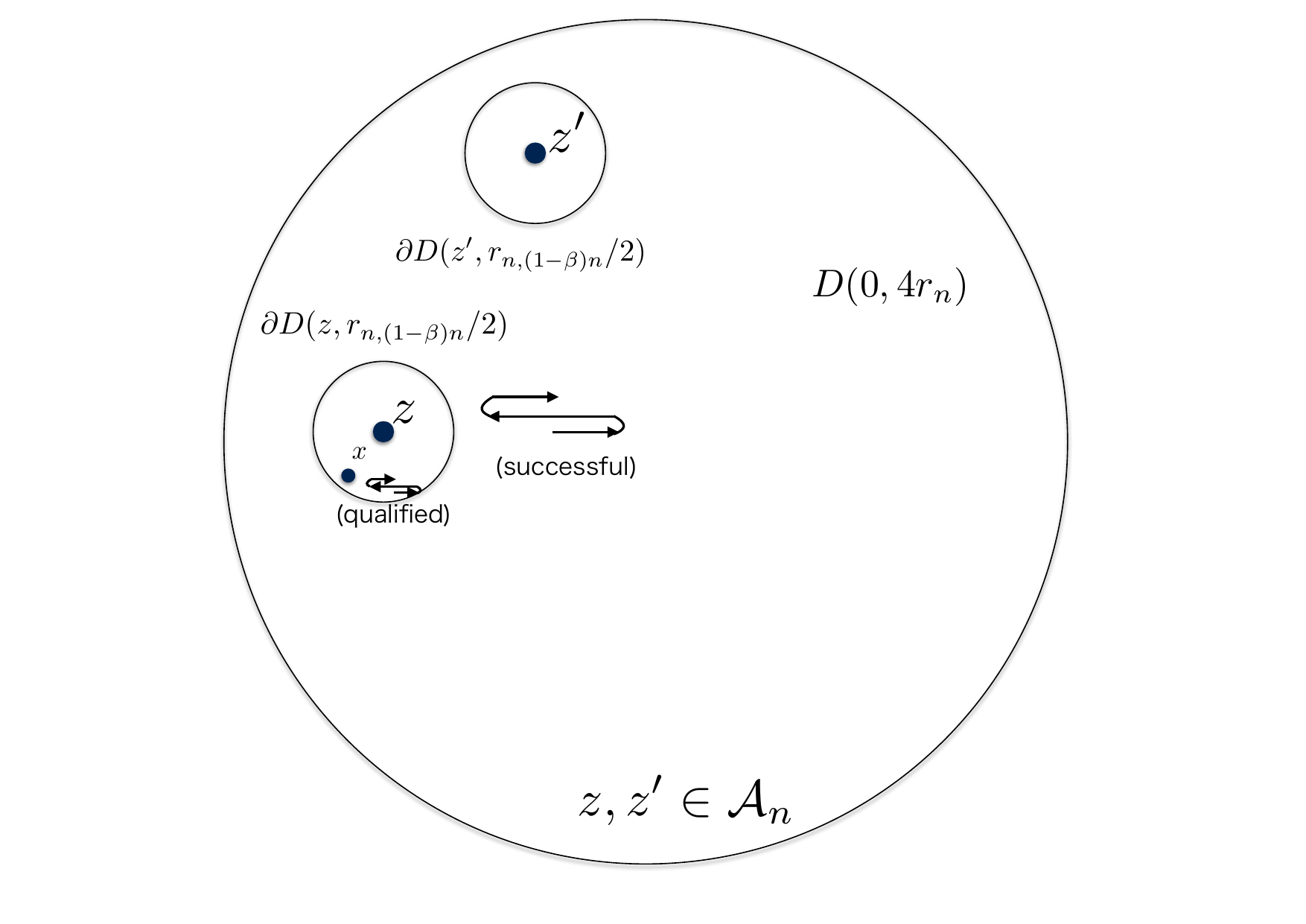}
\caption{}
\end{figure}   
\begin{rem}
In \cite{kistler}, a slightly different scaling for the excursion counts was considered. 
Roughly speaking, 
they expected $\sqrt{\hat{N}_{n,l}^{z,x}}$ to be an almost linear function in $l$. 
However, as we later use the same method as in \cite{abe, Dembo2, rosen}, we use almost the same definition of $r_k$. 
\end{rem}
\begin{prop}\label{r1*} 
As $n\to \infty$,
\begin{align*}
P\bigg(&\bigg\{x\in D\bigg(z,\frac{r_{n,(1-\beta) n}}{2}\bigg): x \text{ is }(n,\beta)
\text{-qualified}\bigg\}\\
\subset 
&\bigg\{x\in D\bigg(z,\frac{r_{n,(1-\beta) n}}{2}\bigg): 
\frac{K(\hat{\mathcal{R}}^z_{(1-\beta) n, \tilde{f}((1-\beta) n)-(1-\beta) n},x)}{(\log K_n)^2}\ge 
\frac{4\alpha}{\pi} -\frac{2}{\log \log  n} \bigg\}\bigg)\to 1.
\end{align*}
\end{prop}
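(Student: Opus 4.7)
The plan is to establish that with high probability, every $(n,\beta)$-qualified $x \in D(z,r_{n,(1-\beta)n}/2)$ accumulates the required local time, via a per-point probabilistic bound combined with a union bound over the at-most $K_n^{2(1-\beta)}$ candidate points. Since $|D(z,r_{n,(1-\beta)n}/2)|\le C r_{n,(1-\beta)n}^2 = O(K_n^{2(1-\beta)})$, the set-inclusion event fails with probability at most
\[
\sum_{x \in D(z,r_{n,(1-\beta)n}/2)} P\!\left(x\ \text{is}\ (n,\beta)\text{-qualified and}\ K(\hat{{\cal R}}^z_{(1-\beta)n,\tilde n_{(1-\beta)n}-(1-\beta)n},x) < \bigl(\tfrac{4\gamma^2\alpha}{\pi}-\tfrac{2}{\log\log n}\bigr)(\log K_n)^2 \right),
\]
so it suffices to show each per-point probability is $o(K_n^{-2(1-\beta)})$.

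For fixed $x$, I would decompose the local time along the nested-scale excursion structure around $x$. Every visit to $x$ occurs inside one of the $\hat N^{z,x}_{n,n}$ outward excursions from $\partial D(x,1) = \partial D(x,r_{n,n})$ to $\partial D(x,r_{n,n-1})$, and by the strong Markov property at the start of each excursion the visit counts across successive excursions are i.i.d., with common mean equal to the Green's function $(2/\pi)\log r_{n,n-1} + O(1)$ supplied by (\ref{gg2*}). Combining this with the cascade of hitting-probability ratios from (\ref{g2}) across the intermediate scales $(1-\beta)n+\eta \le l \le n$, and using the qualification bounds $|\hat N^{z,x}_{n,l} - \hat n_l| \le n$ to pin down the excursion counts at each level, the conditional expectation of $K$ on the qualification event can be read off as $(1+o(1))(4\gamma^2\alpha/\pi)(\log K_n)^2$, thanks to the specific piecewise interpolating form chosen for $\hat n_l$.

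The concentration step then closes the argument. Conditionally on the qualified excursion counts, $K$ is a sum of $\Theta(n^2\log n)$ approximately independent geometric-type contributions (each visit count during an excursion has polynomial tails controlled by (\ref{gg2*})), so a Chernoff exponential-tail estimate yields a deviation probability at most $\exp(-cn)$ for deviations of order $(\log K_n)^2/\log\log n$, which comfortably beats $K_n^{-2(1-\beta)}$. The main obstacle is the cascade identification in the middle step: the piecewise-quadratic form of $\hat n_l$ is precisely tuned so that telescoping the Green's-function contribution against the qualification-constrained counts $\hat n_l$ at all intermediate scales produces exactly the target $\gamma^2$-scaling. This requires a careful calculation combining (\ref{g2}) with (\ref{gg2*}) scale-by-scale, exploiting that the $O(n)$ qualification slack is negligible against $\hat n_l = \Theta(n^2\log n)$; once the expectation is correctly identified, the union bound and the concentration step are essentially routine.
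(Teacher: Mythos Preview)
Your overall strategy matches the paper's---a union bound over $x$ in the disk, followed by a concentration estimate for the local time given the innermost excursion count---but the quantitative claims you make do not close the argument.

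First, the disk size: $r_{n,(1-\beta)n}=r_n/r_{(1-\beta)n}\approx K_n^{\beta}$, not $K_n^{1-\beta}$, so there are on the order of $K_n^{2\beta}\approx e^{C n\log n}$ candidate points. Second, and this is the real gap, a per-point bound of $\exp(-cn)$ is nowhere near enough to defeat a union bound over $e^{C n\log n}$ points; your ``comfortably beats $K_n^{-2(1-\beta)}$'' is simply false, since $K_n^{-c'}=\exp(-c'' n\log n)\ll \exp(-cn)$. The paper instead reduces to the event $\{K(\hat{\cal R}^x_{n,M},x)\le (\tfrac{4\gamma^2\alpha}{\pi}-\tfrac{1}{\log\log n})(\log K_n)^2\}$ for a fixed number $M$ of innermost excursions and invokes Lemma~3.1 of \cite{rosen}, which gives the much stronger bound $\exp\bigl(-cn(\log n/\log\log n)^2\bigr)$. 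A careful Chernoff estimate for a sum of $\Theta(n^2\log n)$ geometric variables with mean $\Theta(\log n)$ and deviation of order $(\log K_n)^2/\log\log n$ does yield this strength, so your method can be repaired---but you must actually obtain and use that rate.

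Finally, the ``cascade across intermediate scales'' is a detour. The qualification constraint at the single innermost level $l=n$ already pins $\hat N^{z,x}_{n,n}$ to within $O(n)$ of its target, which is all that is needed to write $K$ as a sum of (approximately) i.i.d.\ contributions and read off the mean; the intermediate levels $\hat n_l$ play no role in this step. The paper exploits this directly via the one-line containment $\{x\text{ qualified}\}\cap\{K(\hat{\cal R}^z,\ldots,x)\text{ small}\}\subset\{K(\hat{\cal R}^x_{n,M},x)\text{ small}\}$ and then cites Rosen, avoiding any scale-by-scale computation.
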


\begin{prop}\label{r1**} 
For $x \in D(z,r_{n,(1-\beta) n}/2)$,
\begin{align*}
 P(x \text{ is }(n,\beta)\text{-qualified})=(1+o(1))q_n,
\end{align*}
where $q_n$ satisfies the following: 
there exist $c_1(\gamma,\alpha,\beta,\eta)$ and $c_2(\gamma,\alpha,\beta,\eta)>0$ 
such that, for all sufficiently large $n\in \mathbb{N}$, 
\begin{align*}
 e^{-c_1n \log \log n} K_n^{-2\alpha(1-\gamma(1-\beta))^2/\beta } 
\le q_n
\le  e^{-c_2n \log \log n} K_n^{-2\alpha(1-\gamma(1-\beta))^2/\beta } .
\end{align*}
\end{prop}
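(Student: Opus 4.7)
The plan is to treat the sequence $(\hat{N}_{n,l}^{z,x})_{(1-\beta)n+\eta\le l\le n}$ as a Markov chain in the level index $l$, whose one-step transitions are governed by the hitting probabilities of the simple random walk between the three concentric circles $\partial D(x,r_{n,l-2})$, $\partial D(x,r_{n,l-1})$, $\partial D(x,r_{n,l})$ around $x$. Splitting the trajectory at successive visits to the middle ring, the conditional distribution of $\hat{N}_{n,l}^{z,x}$ given $\hat{N}_{n,l-1}^{z,x}=m$ is, up to $O(1)$ boundary corrections, that of a sum of $m$ i.i.d.\ geometric random variables with success parameter $1-p_l$, where by (\ref{g2})
\begin{align*}
p_l=\frac{\log(r_{l-1}/r_{l-2})}{\log(r_l/r_{l-2})}(1+o(1))=\frac{\log(l-1)}{\log l+\log(l-1)}(1+o(1)),
\end{align*}
so that the conditional mean is $\mu_l(m)=m\cdot p_l/(1-p_l)$ and the conditional variance is $V_l(m)\asymp m$.

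I would first verify, by direct substitution with $r_l=(l!)^3$, $u_l=(l-(1-\beta)n)/\beta$ and $A_l=u_l(1-\gamma(1-\beta))+n\gamma(1-\beta)$, that $(\hat n_l)_{(1-\beta)n\le l\le n}$ is the optimal bridge trajectory of this Markov chain joining the initial datum $\hat N_{n,(1-\beta)n}^{z,x}\approx\tilde n_{(1-\beta)n}$ to the target $\hat n_n=6\alpha n^2\log n$; the latter corresponds, after multiplication by the local-time-to-excursion Green-function factor $\frac{2}{\pi}\log r_{n,n-1}=6\log n/\pi$ coming from (\ref{gg2*}), to exactly the $\alpha$-favorite threshold $K(\cdot,x)\ge(4\alpha/\pi)(\log K_n)^2$. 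Next I would invoke a local limit theorem for sums of geometric random variables to obtain, uniformly in $|m_{l-1}-\hat n_{l-1}|\le n$,
\begin{align*}
P(\hat{N}_{n,l}^{z,x}=m_l\mid \hat{N}_{n,l-1}^{z,x}=m_{l-1})=\frac{1+o(1)}{\sqrt{2\pi V_l(m_{l-1})}}\exp\!\Bigl(-\frac{(m_l-\mu_l(m_{l-1}))^2}{2V_l(m_{l-1})}\Bigr),
\end{align*}
then sum over $m_l$ in the window $|m_l-\hat n_l|\le n$ and telescope across the $\beta n-\eta$ levels. A direct computation using $\hat n_l-\mu_l(\hat n_{l-1})\asymp A_l\log l/\beta$ and $V_l(\hat n_{l-1})\asymp n^2\log l$ shows that the accumulated quadratic action equals $2\alpha(1-\gamma(1-\beta))^2/\beta\cdot\log K_n\cdot(1+o(1))$, yielding the required $K_n^{-2\alpha(1-\gamma(1-\beta))^2/\beta}$ factor, while the $\beta n$ Gaussian prefactors $n/\sqrt{V_l}\asymp 1/\sqrt{\log l}$ multiply to produce a term of the form $e^{\pm c n\log\log n}$, which accounts for the sub-polynomial correction appearing in the two bounds on $q_n$.

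The main obstacle will be executing the local limit theorem with sufficient uniformity. The success probabilities $p_l$ converge to $1/2$ only at rate $O(1/\log l)$, so for the smallest admissible levels $l\approx(1-\beta)n+\eta$ the Gaussian approximation is delicate and demands sharper-than-bare-CLT control (for instance, an Edgeworth expansion or explicit moment calculations for sums of non-identically-distributed geometric variables). Furthermore, the $O(r^{-1})$ error terms in (\ref{g2}) and (\ref{gg2*}) must be controlled uniformly in $x\in D(z,r_{n,(1-\beta)n}/2)$, and the compounded multiplicative errors across the $\beta n$ levels must stay within the $e^{cn\log\log n}$ budget. Once this uniformity is secured, both the upper and lower bounds on $q_n$ come out simultaneously from the same product identity, and the uniform $(1+o(1_n))$ conclusion follows since boundary effects at the innermost $O(1)$ levels contribute negligibly.
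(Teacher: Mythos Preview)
Your approach is essentially the paper's: treat $(\hat N_{n,l}^{z,x})_l$ as a negative-binomial Markov chain in $l$, compute the one-step Gaussian cost along the bridge $\hat n_l$, and telescope. The paper writes the transition density explicitly as $\binom{m_l+m_{l-1}-1}{m_l}(1/2)^{m_l+m_{l-1}-1}$ and bounds it by Stirling, which is your local CLT written out by hand; the resulting exponent and the $(\log n)^{-cn}=e^{\pm cn\log\log n}$ prefactor match.

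Two points where you diverge. First, your rate $p_l\to 1/2$ is too pessimistic: with $r_l=(l!)^3$ one has $\log(l-1)/(\log l+\log(l-1))=\tfrac12+O(1/(l\log l))$, not $O(1/\log l)$, and the paper even quotes $\tfrac12(1+O(n^{-8}))$ from Rosen. The compounded multiplicative error across $\beta n$ levels is therefore $1+o(1)$, so no Edgeworth sharpening is needed; the uniformity worry you flag is not a real obstacle. Second, you gloss over the bottom boundary. The qualified event is anchored to a fixed $z$-excursion count $\tilde n_{(1-\beta)n}-(1-\beta)n$ at level $(1-\beta)n$, while the first monitored $x$-level is $(1-\beta)n+\eta$. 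The paper handles this gap of $\eta$ separately via the combinatorial identity (\ref{we2}), introducing a factor $\hat q_n$ with transition probability $1/(\eta+1)$ and showing it contributes $n^{o(1)}$. Your Markov-chain picture goes through once you insert this one extra step at the bottom; without it the chain has no correctly specified initial law.
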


\begin{prop}\label{r1***}
$(i)$ There exist  
$c_3(\gamma,\alpha,\beta)$ and $c_4(\gamma,\alpha,\beta)>0$ 
such that, for all sufficiently large $\eta\in \mathbb{N}$, $x,y \in D(z,r_{n,(1-\beta)n}/2)$ 
with $(1-\beta )n+\eta \le l(x,y)\le n$, and sufficiently large $n\in \mathbb{N}$, 
\begin{align*}
 &P(x \text{ and }y \text{ are }(n,\beta)\text{-qualified})\\
\le &n^{c_3} e^{c_4(l-n+\beta n-\eta+1) \log \log n} q_n^2
\exp\bigg(\frac{6\alpha}{\beta}(1-\gamma(1-\beta))^2(l-n+\beta n-\eta+1)\log n\bigg).
\end{align*}
$(ii)$ For all $x,y \in D(z,r_{n,(1-\beta) n}/2)$ 
with $l(x,y)\le (1-\beta) n+\eta-1$, 
\begin{align*}
P(x \text{ and }y \text{ are }(n,\beta)\text{-qualified})=(1+o(1))q_n^2.
\end{align*}
\end{prop}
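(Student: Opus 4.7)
My plan is to exploit the structural independence of excursions around $x$ and around $y$ once the corresponding disks become disjoint, together with a crude per-scale bound in the regime where the disks still overlap. The main tools are the strong Markov property applied at successive hitting times of $\partial D(x, r_{n,l})$ and $\partial D(y, r_{n,l})$, combined with the hitting estimate (\ref{g2}) and the Green's function bound (\ref{gg2*}), exactly as used in the proof of Proposition \ref{r1**}.

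For part $(ii)$, when $l(x,y) \le (1-\beta)n + \eta - 1$, every scale $l \ge (1-\beta)n + \eta$ appearing in the qualification condition satisfies that $D(x, r_{n,l})$ and $D(y, r_{n,l})$ are disjoint. The chains $(\hat N_{n,l}^{z,x})_l$ and $(\hat N_{n,l}^{z,y})_l$ are therefore run in disjoint annuli, and the Markov property factorizes the joint excursion dynamics into two independent copies of the single-point chain analyzed in Proposition \ref{r1**}, up to the event that a single ambient excursion visits both disks. By (\ref{gg2*}) this happens with probability $o(1_n)$ per excursion, and summing over the $O(n^2\log n)$ excursions up to time $\hat{\mathcal R}^z_{(1-\beta)n,\tilde n_{(1-\beta)n}-(1-\beta)n}$ contributes only $o(1_n)$ to the total probability. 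This yields the claimed asymptotic $(1 + o(1_n))q_n^2$.

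For part $(i)$, I would split the qualification scales into a \emph{correlated range} $[(1-\beta)n + \eta,\, l(x,y)-1]$, where the disks around $x$ and $y$ still overlap, and a \emph{decoupled range} $[l(x,y),\, n]$, where they are disjoint. In the decoupled range, the argument of part $(ii)$, applied conditionally on the excursion structure at scale $l(x,y)$, gives a product-type bound of the form $q_n^2$ times polynomial and $e^{O(\log\log n)}$ factors per scale. In the correlated range we simply drop the qualification constraint on $y$ and keep only that on $x$, so each such scale contributes a single per-scale probability factor rather than a squared one. By the scale-by-scale concentration estimate for excursion counts underlying Proposition \ref{r1**}, this factor is of order $\exp(-\tfrac{6\alpha}{\beta}(1-\gamma(1-\beta))^2 \log n)$, so passing from a product over all qualification scales (whose product is $q_n$) to one over only the decoupled sub-range incurs the multiplicative correction $\exp(\tfrac{6\alpha}{\beta}(1-\gamma(1-\beta))^2\log n)$ per missing correlated scale. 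Summing over the $l(x,y)-(1-\beta)n-\eta+1$ correlated scales and absorbing subexponential corrections into $n^{c_3}e^{c_4(\cdots)\log\log n}$ produces exactly the bound in the statement.

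The main obstacle I foresee is the bookkeeping at the interface scale $l(x,y)$: conditioning on the excursion counts there is essential for decoupling the subsequent dynamics, but one must verify that this conditioning only distorts the marginal laws by polynomial-in-$n$ factors. The $+1$ buffer built into the definition $D(x, r_{n,l}+1) \cap D(y, r_{n,l}+1) = \emptyset$ guarantees a strictly positive gap between the disjoint inner disks, so a boundary Harnack type comparison yields uniform control on the entry distribution from the correlated outer region; this step follows the same pattern as the analogous arguments in \cite{Dembo1, Dembo2} but has to be executed carefully for the joint $(x,y)$ setting here.
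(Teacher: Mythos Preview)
Your overall strategy---split the qualification scales at $l(x,y)$, factorize on the decoupled range, and pay a single per-scale cost on the correlated range---is exactly the route the paper takes by citing Proposition~3.5 of \cite{abe} (cf.\ Lemma~3.2 of \cite{rosen}, Lemma~4.2 of \cite{Dembo2}) together with Lemma~\ref{ghj}. So the architecture of your argument is the right one.

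There is, however, a genuine imprecision in your decoupling mechanism for part~$(ii)$. The dependence between $(\hat N_{n,l}^{z,x})_l$ and $(\hat N_{n,l}^{z,y})_l$ does \emph{not} arise from ``a single ambient excursion visiting both disks'', and a per-excursion $o(1_n)$ bound summed over $O(n^2\log n)$ excursions would not give $o(1_n)$ anyway. The disks $D(x,r_{n,l})$ and $D(y,r_{n,l})$ being disjoint already forces each individual excursion inside one of them to stay there; the real coupling is through the \emph{entry points} on $\partial D(x,r_{n,l-1})$, which are determined by the same underlying walk that also determines the behaviour around $y$. The correct way to break this is precisely the conditional statement of Lemma~\ref{ghj}: conditioning on ${\cal G}_l^x$ (which, once the $x$- and $y$-disks at level $l$ are disjoint, contains all the information needed for the $y$-qualification at scales $\ge l$) leaves the law of $(\hat N_{n,i}^{z,x})_{i\ge l+1}$ equal to $(1+o(1_n))$ times its unconditional law, uniformly over the conditioning. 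This Harnack-type step is exactly the ``boundary Harnack comparison'' you correctly flag in your final paragraph---but it is the whole engine of the proof, and it must already be invoked in part~$(ii)$, not only at the interface scale in part~$(i)$. Once you replace your union-bound heuristic by Lemma~\ref{ghj}, both parts go through as you outline.
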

\begin{rem}\label{***}
Note that the choice of sufficiently large $\eta\in \mathbb{N}$ is used instead of the sufficiently large  $\overline{\gamma}>0$ in \cite{abe, Dembo2}. 
\end{rem}
Roughly speaking, 
Proposition \ref{r1*} ensures that the set of qualified points is essentially a restriction of 
$\tilde{W}^z_{\tilde{f}((1-\beta)n)-(1-\beta)n}$. 
Propositions \ref{r1**} and \ref{r1***} are used to apply the second moment method based on the Paley--Zygmund inequality. 
\begin{proof}[Proof of Proposition \ref{r1*}]
We refer to the proof of  \cite[Lemma $3.1$]{rosen} (see also \cite[Proposition $3.3$]{abe}).  
Note that,  for any $x\in D(z,r_{n,(1-\beta) n}/2)$,
\begin{align*}
&\bigg\{x \text{ is }(n,\beta)
\text{-qualified}\bigg\}
\cap 
\bigg\{\frac{K(\hat{\mathcal{R}}^z_{(1-\beta) n, \tilde{f}((1-\beta) n)-(1-\beta) n},x)}{(\log K_n)^2}\ge \frac{4\alpha}{\pi} -\frac{2}{\log \log  n} \bigg\}^c\\
\subset 
&\bigg\{ K(\hat{\mathcal{R}}^x_{n,\hat{f}(n,n)-n},x)  
\le \bigg(\frac{4\alpha}{\pi} -\frac{1}{\log\log n}\bigg)(\log K_n)^2\bigg\}.
\end{align*}
As there exists some $c>0$ such that $|D(0,K_n)|\le e^{cn \log n}$ 
for all sufficiently large $n\in \mathbb{N}$, 
it is sufficient to prove that  there exists some $c>0$ such that 
\begin{align*}
P\bigg(K(\hat{\mathcal{R}}^x_{n,\hat{f}(n,n)-n},x)  
\le \bigg(\frac{4\alpha}{\pi} -\frac{1}{\log\log n}\bigg)(\log K_n)^2\bigg)
\le \exp(-cn (\log n/\log_2 n)^2).
\end{align*}
This proof is the same as that of \cite[Lemma $3.1$]{rosen}. Thus, it is omitted. 
\end{proof}

\begin{proof}[Proof of Proposition \ref{r1**}]
Fix $x\in D(z,r_{n,(1-\beta) n}/2)$. 
Note that, from (\ref{g2}), we have that
\begin{align*}
P^{x_0}(T_{\partial D(x, r_{n, l+1})}< T_{\partial D(x, r_{n,l-1})})=\frac{1}{2}(1+O(n^{-8}))
\end{align*}
for $(1-\beta)n+\eta \le  l\le n-1$, 
$x_0 \in \partial D(x, r_{n,l})$, as well as  \cite[$(4.14)$]{rosen}. 
Then, for $x_0\in \partial D(x, r_{n, (1-\beta) n+\eta-1})$, 
\begin{align*}
 P^{x_0}(T_{\partial D(z, r_{n, (1-\beta)n-1})}
< T_{\partial D(x, r_{n, (1-\beta)n+\eta})} )=(1+O(n^{-8}))\frac{1}{\eta+1}
\end{align*}
(see Figure 4). 
\begin{figure}[h]
\centering
\includegraphics[width=130mm]{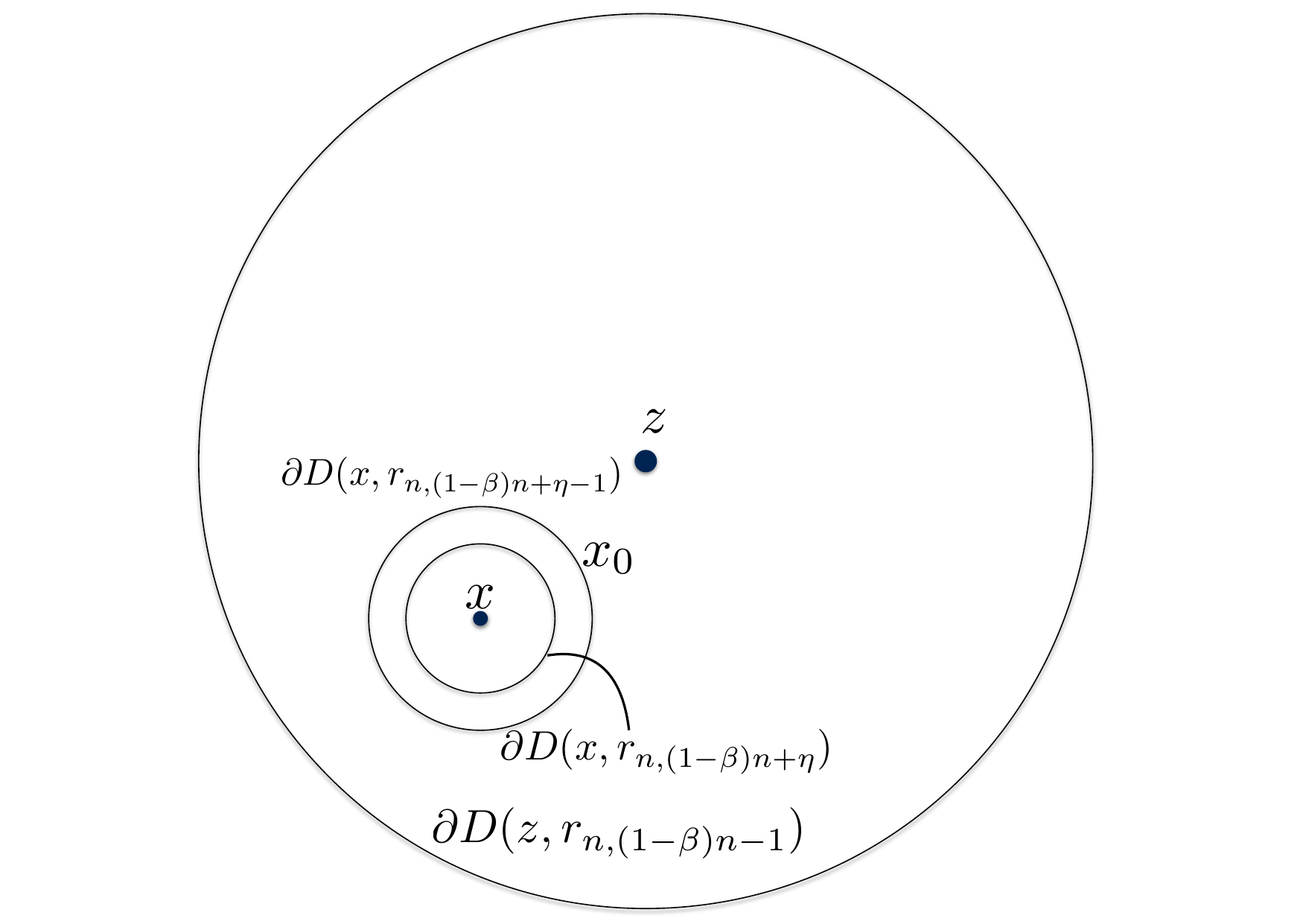}
\caption{}
\end{figure}
In addition, for $m_{(1-\beta)n+\eta}$ with 
$|m_{(1-\beta)n+\eta}-\hat{f}(n,(1-\beta)n+\eta)|\le n$, let
\begin{align*}
\hat{q}_n:=\sum_{1\le i \le m_{(1-\beta)n+\eta} \wedge (\tilde{f}((1-\beta)n)-(1-\beta) n-1) }
&\bigg(\frac{\eta}{\eta+1}\bigg)^{\tilde{f}((1-\beta) n)-(1-\beta)n-1-i}\bigg(\frac{1}{\eta+1}\bigg)^i\\
\begin{pmatrix}
\tilde{f}((1-\beta)n)-(1-\beta)n-1\\
i
\end{pmatrix}
&\bigg(\frac{1}{\eta+1}\bigg)^i\bigg(\frac{\eta}{\eta+1}\bigg)^{m_{(1-\beta)n+\eta}-i}
\begin{pmatrix}
m_{(1-\beta)n+\eta}-1\\
i
\end{pmatrix}
.
\end{align*}
Then, (\ref{we2}) and the same argument as for \cite[Proposition $3.4$]{abe}, \cite[ ($4.13$)]{rosen}, or \cite[($5.9$)]{Dembo2} imply that 
\begin{align}\label{f1}
 P(x \text{ is }(n,\beta)\text{-qualified})
=P(|\hat{N}_{n,l}^{z,x} -\hat{f}(n,l) |\le n \text{ for }(1-\beta)n+\eta \le l\le n)
=(1+o(1))q_n,
\end{align}
where
\begin{align}\label{f2}
q_n:=\sum \hat{q}_n
\times \prod_{l=(1-\beta)n+\eta+1}^n
 \left(
    \begin{array}{cc}
      m_l+m_{l-1}-1 \\
      m_l
    \end{array}
  \right)
\bigg(\frac{1}{2}\bigg)^{m_l+m_{l-1}-1}.
\end{align}
Here, the summations in (\ref{f1}) and (\ref{f2}) 
cover all $m_{(1-\beta)n+\eta},\ldots,m_{n}$ 
with $|m_l-\hat{f}(n,l)|\le n$ for $(1-\beta)n+\eta \le l\le n$. 
Hence, the Stirling formula implies that, for $m_{(1-\beta)n+\eta}$ 
with $|m_{(1-\beta)n+\eta}-\hat{f}(n,(1-\beta)n+\eta)|\le n$,
\begin{align}\notag
\hat{q}_n
\ge &\bigg(\frac{\eta}{\eta+1}\bigg)^{\tilde{f}(\beta n)-\beta n-1-i}\bigg(\frac{1}{\eta+1}\bigg)^i
\begin{pmatrix}
\tilde{f}(\beta n)-\beta n-1\\
i
\end{pmatrix}
\bigg(\frac{1}{\eta+1}\bigg)^i\bigg(\frac{\eta}{\eta+1}\bigg)^{m_{(1-\beta)n+\eta}-i}\\
\notag
&
\begin{pmatrix}
m_{(1-\beta)n+\eta}-1\\
i
\end{pmatrix}
\bigg|_{i=(2\eta\sqrt{ \tilde{f}(\beta n) m_{(1-\beta)n+\eta} }-
(m_{(1-\beta)n+\eta}+\tilde{f}(\beta n)-\beta n))/2(\eta^2-1) }\\
\label{qtt**}
=& n^{o(1)}.
\end{align}
By the same estimate as in the proof of  \cite[Proposition $3.4$]{abe}, 
we have that, for all $m_i $ 
with $|m_i-\hat{f}(n,i)|\le n$ and $(1-\beta)n+\eta+1\le l\le n$,
\begin{align}\notag
 &\left(
    \begin{array}{cc}
      m_l+m_{l-1}-1 \\
      m_l
    \end{array}
  \right)
\bigg(\frac{1}{2}\bigg)^{m_l+m_{l-1}-1}\\
\notag
\ge & c (m_l)^{-1/2}
\exp\bigg\{-m_{l-1}\bigg(\frac{1}{4}\bigg(\frac{m_l}{m_{l-1}}-1\bigg)^2+c'\bigg|\frac{m_l}{m_{l-1}}-1\bigg|^3\bigg) \bigg\}\\
\notag
\ge & c (\log n)^{-cn}
\exp\bigg\{-
\frac{(m_l-m_{l-1})^2}{4m_{l-1}} \bigg\}\\
\notag
\ge & c (\log n)^{-cn}
\exp\bigg\{-
\frac{(6\alpha g_l^2 \log n
-6\alpha (g_l-(1/\beta-\gamma(1-\beta)/\beta ))^2 \log n)^2 }{4m_{l-1}} \bigg\}\\
\notag
\ge & c (\log n)^{-cn}
\exp\bigg\{-
\frac{( 12\alpha g_l(1/\beta-\gamma(1-\beta)/\beta )\log n )^2 }{4m_{l-1}} \bigg\}\\
\notag
\ge & c (\log n)^{-cn}
\exp\bigg\{-
\frac{6\alpha g_l(1/\beta-\gamma(1-\beta)/\beta )^2 \log n}
{g_{l-1}} \bigg\}\\
\label{qtt++} 
\ge & c (\log n)^{-cn}
\exp\bigg\{-\frac{6\alpha}{\beta^2}(1-\gamma(1-\beta))^2\log n \bigg\},
\end{align}
where $g_l:= l-(1-\beta) n/\beta+(n-l-(1-\beta) n/\beta)\gamma(1-\beta)$. 
Therefore, we obtain 
\begin{align*}
q_n
\ge & c(\log n)^{-cn}
K_n^{-2\alpha(1-\gamma(1-\beta))^2/\beta},
\end{align*}
and hence, we have the desired lower bound.  
Note that (\ref{we1**}) makes it trivial that $\hat{q}_n\le 1$, and hence, 
\begin{align*}
\sum_{|m_{(1-\beta)n+\eta}-\hat{f}(n,(1-\beta)n+\eta)|\le n }\hat{q}_n\le n.
\end{align*}
If we use this instead of  (\ref{qtt**}) and a computation such as (\ref{qtt++}), 
we have the upper bound. 
\end{proof}
To prove Proposition \ref{r1***}, we set the following sigma field, as in \cite{abe, Dembo2}. 
First, we define a sequence of stopping times.  
Fix $z\in \mathcal{A}_n$ and $x\in D(z,r_{n, (1-\beta )n}/2)$. 
For $(1-\beta)n+\eta\le l\le n$, let 
$\sigma^{(0)}_x[l]:=0$ and 
\begin{align*}
\tau_x^{(0)}[l]&:=\inf \{m\ge 0: S_m \in \partial D(x,r_{n,l})\},\\
\sigma _x^{(i)}[l]&:=\inf \{m\ge \tau_x^{(i-1)}[l] : S_m \in \partial D(x,r_{n,l-1})\}\text{ for } i\ge 1,\\
\tau _x^{(i)}[l]&:=\inf \{m\ge \sigma_x^{(i)}[l] : S_m \in \partial D(x,r_{n,l})\}\text{ for } i\ge 1.
\end{align*}  
In addition, let $\mathcal{G}_l^x:=\sigma (\bigcup_{i\ge 0} 
\{S_m :\sigma^{(i)}_x[l]\le  m \le\tau^{(i)}_x[l] \})$. 
Then, we obtain the following lemma. 
\begin{lem}\label{ghj}
There exists some $\epsilon_n$ with $\lim_{n\to \infty}\epsilon_n=0$ 
such that, for all $n-\beta n+\eta\le l\le n-1$  
with $|m_l-\hat{f}(n,l)|\le n$ and $x\in D(z,r_{n,(1-\beta) n}/2)$,
\begin{align*}
&P(\hat{N}_{n,i}^{z,x}=m_i \text{ for all } i=l,\ldots,n|\mathcal{G}_l^x)\\
=&(1+\epsilon_n)P(\hat{N}_{n,i}^{z,x}=m_i \text{ for all } i=l+1,\ldots,n|
\hat{N}_{n,l}^{z,x}=m_l)
1_{\{\hat{N}_{n,l}^{z,x}=m_l\}}.
\end{align*} 
\end{lem}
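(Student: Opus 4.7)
The plan is to identify the left-hand side via the strong Markov property at the stopping times $\tau^{(i)}_x[l]$, and then to invoke a Harnack-type comparison to eliminate the residual dependence on the precise entry points on $\partial D(x, r_{n,l})$, which are the only extra information carried by ${\cal G}_l^x$ beyond the count $\hat{N}_{n,l}^{z,x}$.

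First I would observe that $\hat{N}_{n,l}^{z,x}$, the terminal time $\hat{\cal R}^z_{(1-\beta)n, \tilde{n}_{(1-\beta)n}-(1-\beta)n}$, and the sequence of entry points $y_j := S_{\tau^{(j-1)}_x[l]} \in \partial D(x, r_{n,l})$ are all ${\cal G}_l^x$-measurable, since everything outside the inner disk $D(x, r_{n,l})$ (including the $(1-\beta)n$-scale excursion structure around $z$, as $D(x, r_{n,l}) \subset D(z, r_{n,(1-\beta)n})$) is recorded by ${\cal G}_l^x$. Moreover, on $\{\hat{N}_{n,l}^{z,x}=m_l\}$ the counts $\hat{N}_{n,i}^{z,x}$ for $i \ge l+1$ accumulate entirely during the $m_l$ inner excursions $[\tau^{(j-1)}_x[l], \sigma^{(j)}_x[l]]$, $j=1,\dots,m_l$. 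Applying the strong Markov property at each $\tau^{(j-1)}_x[l]$ yields
\begin{align*}
P\bigl(\hat{N}_{n,i}^{z,x}=m_i \text{ for } l+1\le i\le n \bigm| {\cal G}_l^x\bigr)\, 1_{\{\hat{N}_{n,l}^{z,x}=m_l\}}
= \sum_{\mathbf{k}} \prod_{j=1}^{m_l} P^{y_j}(E_{k_j})\, 1_{\{\hat{N}_{n,l}^{z,x}=m_l\}},
\end{align*}
where $E_{k_j}$ is the event that the $j$-th inner excursion (starting on $\partial D(x,r_{n,l})$ and terminating on its first visit to $\partial D(x,r_{n,l-1})$) makes the prescribed contribution $k_j$ to the joint counts $(m_{l+1},\dots,m_n)$, and the sum runs over admissible tuples $\mathbf{k}=(k_1,\dots,k_{m_l})$. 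Averaging the identical expression over the law of $(y_j)$ induced by conditioning only on $\hat{N}_{n,l}^{z,x}=m_l$ produces precisely the factor appearing on the right-hand side.

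The main obstacle is quantifying the near-independence of $P^y(E_k)$ on $y\in \partial D(x,r_{n,l})$. Since $w\mapsto P^w(E_k)$ is non-negative and harmonic on the annulus $D(x,r_{n,l-1})\setminus D(x,r_{n,l})$ (the event $E_k$ does not activate until the walk re-enters the inner disk), the Harnack inequality for simple random walk, exploited with the large scale gap $r_{n,l-1}/r_{n,l}=l^3$, gives some universal $c>0$ such that
\begin{align*}
\sup_{y,y'\in \partial D(x,r_{n,l})} \frac{P^y(E_k)}{P^{y'}(E_k)} \le 1+C\bigl(r_{n,l}/r_{n,l-1}\bigr)^c = 1+O(l^{-3c}),
\end{align*}
uniformly in the admissible $k$; this is the same comparison used implicitly in Lemma $3.2$ of \cite{rosen} and in \cite{abe, Dembo2}. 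Since the hypothesis $|m_l-\hat{n}_l|\le n$ forces $m_l\le Cn^2\log n$ while $l\ge(1-\beta)n+\eta$ grows linearly in $n$, the accumulated error over the $m_l$ factors is
\begin{align*}
\bigl(1+O(l^{-3c})\bigr)^{m_l} = 1+O(m_l\, l^{-3c}) = 1+o(1),
\end{align*}
uniformly in the $y_j$, in $\mathbf{k}$, and in admissible $(m_{l+1},\dots,m_n)$, provided $c$ exceeds $2/3$, which the standard Harnack inequality comfortably supplies. Factoring this common ratio out of the sum identifies the left-hand side with the right-hand side up to a multiplier $(1+\epsilon_n)$ with $\epsilon_n\to 0$, completing the proof.
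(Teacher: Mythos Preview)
Your argument is essentially the one the paper defers to---Corollary~5.1 and Lemma~2.4 of \cite{Dembo2}---namely, strong Markov at the successive $\tau^{(j)}_x[l]$ followed by a hitting-distribution comparison that washes out the dependence on the entry points $y_j$. One minor slip: the function $w\mapsto P^w(E_k)$ is harmonic on the \emph{larger} annulus $D(x,r_{n,l-1})\setminus \overline{D(x,r_{n,l+1})}$ (the event is first triggered upon reaching $\partial D(x,r_{n,l+1})$), and the comparison must be carried out there, since on your stated annulus the points $y,y'\in\partial D(x,r_{n,l})$ lie on the inner boundary where Harnack does not apply; with that correction the ratio bound holds with $c=1$ via Lemma~2.4 of \cite{Dembo2}, and your accumulation $(1+O(l^{-3}))^{m_l}=1+o(1)$ goes through exactly as written.
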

\begin{proof}
The proof is the same as that of \cite[Corollary $5.1$]{Dembo2}, 
because \cite[Lemma $2.4$]{Dembo2} 
holds even for a simple random walk in $\mathbb{Z}^2$ instead of one in a $2$-dimensional torus 
(see also  \cite[Lemma $3.6$]{abe}).  
\end{proof}
\begin{proof}[Proof of Proposition \ref{r1***}]
The proof  is almost the same as that of  \cite[Proposition $3.5$]{abe}, as the argument in \cite{abe} 
holds even for a discrete-time simple random walk in $\mathbb{Z}^2$ 
instead of the continuous-time one with Lemma \ref{ghj} 
(see also \cite[Lemma $3.2$]{rosen} or \cite[Lemma $4.2$]{Dembo2}).  
We now give an overview of the proof. 
As Proposition \ref{r1***}(i) yields Proposition \ref{r1***}(ii),  
we only prove the former. 
For $x,y \in D(z,r_{n,(1-\beta)n}/2)$ 
with $(1-\beta )n+\eta \le l(x,y)\le n$ and all sufficiently large $n\in \mathbb{N}$, 
Lemma \ref{ghj} gives
\begin{align*}
&P(x \text{ and }y \text{ are }(n,\beta)\text{-qualified})\\
\le&P(|\hat{N}_{n,i}^{z,x}-\hat{f}(n,i)|\le n \text{ for all } i=(1-\beta)n+\eta ,\ldots,l-3,l,\ldots,n\\
&\text{ and }|\hat{N}_{n,i}^{z,y}-\hat{f}(n,i)|\le n \text{ for all } i=l,\ldots,n)\\
\le&(1+o(1))\sum_{m_l:|m_l-\hat{f}(n,l)|\le n}
 P(|\hat{N}_{n,i}^{z,x}-\hat{f}(n,i)|\le n \text{ for all } i=(1-\beta)n+\eta ,\ldots,l-3,l,\ldots,n\\
&\text{ and }|\hat{N}_{n,i}^{z,y}-\hat{f}(n,i)|\le n \text{ for all } i=l+1 ,\ldots,n|\hat{N}_{n,l}^{z,y}=m_l ).
\end{align*}
Then, it suffices to show the following:    
\begin{align}
\notag
&\sum_{m_l:|m_l-\hat{f}(n,l)|\le n}
P(|\hat{N}_{n,i}^{z,y}-\hat{f}(n,i)|\le n\text{ for all } i=l+1 ,\ldots,n|\hat{N}_{n,l}^{z,y}=m_l )\\
\label{izu1}
\le &n^{c} e^{c(l-n+\beta n+1) \log \log n} q_n
\exp\bigg(\frac{6\alpha}{\beta}(1-\gamma(1-\beta))^2(l-n+\beta n-\eta+1)\log n\bigg),\\
\notag
&P(|\hat{N}_{n,i}^{z,x}-\hat{f}(n,i)|\le n \text{ for all } i=(1-\beta)n+\eta ,\ldots,l-3,l,\ldots,n )\\
\label{izu2}
\le &n^{c} e^{c(l-n+\beta n-\eta+1) \log \log n} q_n.
\end{align}
First, we prove (\ref{izu1}). 
By Proposition \ref{r1**} and  Lemma \ref{ghj}, 
\begin{align*}
&(1+o(1))q_n\\
=&P(y \text{ is }(n,\beta)\text{-qualified})\\
\ge &(1+o(1))
\sum_{m_l:|m_l-\hat{f}(n,l)|\le n}
P(|\hat{N}_{n,i}^{z,y}-\hat{f}(n,i)|\le n \text{ for all } i=(1-\beta)n+\eta ,\ldots,l-1; \hat{N}_{n,l}^{z,y}=m_l) \\
\times&P(|\hat{N}_{n,i}^{z,y}-\hat{f}(n,i)|\le n \text{ for all } i=l+1 ,\ldots,n|\hat{N}_{n,l}^{z,y}=m_l ).
\end{align*}
By the same argument as in the proof of Proposition \ref{r1**}, 
\begin{align*}
&P(|\hat{N}_{n,i}^{z,y}-\hat{f}(n,i)|\le n \text{ for all } i=(1-\beta)n+\eta ,\ldots,l-1; \hat{N}_{n,l}^{z,y}=m_l) \\
\ge &n^{-c} e^{c(l-n+\beta n+1) \log \log n} q_n
\exp\bigg(\frac{6\alpha}{\beta}(1-\gamma(1-\beta))^2(l-n+\beta n-\eta+1)\log n\bigg),
\end{align*}
and hence, we obtain (\ref{izu1}). 
Next, we show  (\ref{izu2}). 
By Lemma \ref{ghj},  (\ref{izu1}), and the same argument as the proof of Proposition \ref{r1**}, 
we have that, for all sufficiently large $\eta>0$, 
\begin{align*}
&P(|\hat{N}_{n,i}^{z,x}-\hat{f}(n,i)|\le n  \text{ for all } i=(1-\beta)n+\eta ,\ldots,l-3,l,\ldots,n )\\
\le &(1+o(1)) 
P(|\hat{N}_{n,i}^{z,x}-\hat{f}(n,i)|\le n  \text{ for all } i=(1-\beta)n+\eta ,\ldots,l-3 )\\
\times &\sum_{m_l:|m_l-\hat{f}(n,l)|\le n}
P(|\hat{N}_{n,i}^{z,x}-\hat{f}(n,i)|\le n  \text{ for all } i=l+1,\ldots,n | \hat{N}_{n,l}^{z,x}=m_l)\\
\le  &n^{c} e^{c(l-n+\beta n-\eta+1) \log \log n} 
\exp\bigg(-\frac{6\alpha}{\beta}(1-\gamma(1-\beta))^2(l-n+\beta n-\eta+1)\log n\bigg)\\
\times &n^{c} e^{c(l-n+\beta n-\eta+1) \log \log n} q_n
\exp\bigg(\frac{6\alpha}{\beta}(1-\gamma(1-\beta))^2(l-n+\beta n)\log n\bigg)\\
\le &n^{c} e^{c(l-n+\beta n-\eta+1) \log \log n} q_n.
\end{align*}
Therefore, we obtain the desired result. 
\end{proof}

\begin{proof}[Proof of Lemma \ref{r1}]
As mentioned in Remark \ref{***}, 
we consider the same argument as in the proof of the lower bound in  \cite[Theorem $1.2$(i)]{abe} (or \cite{Dembo2}). 
If we select $\eta$ such that $c_3<6\eta-7$ in Proposition \ref{r1***}, Propositions \ref{r1**} and \ref{r1***} state that,  for all sufficiently large $n\in \mathbb{N}$, 
 \begin{align*}
E\bigg[\bigg|\bigg\{x\in D\bigg(z, \frac{r_{n,(1-\beta)n}}{2}\bigg) :  x\text{ is }(n,\beta)\text{-qualified} \bigg\}\bigg|\bigg]
\ge (1+o(1))\bigg|D\bigg(z, \frac{r_{n,(1-\beta)n}}{2}\bigg)\bigg|q_n,
\end{align*}
 \begin{align*}
&E\bigg[\bigg|\bigg\{(x,y)\in D\bigg(z, \frac{r_{n,(1-\beta) n}}{2}\bigg)^2 : 
 x,y\text{ are }(n,\beta)\text{-qualified, }l(x,y)\ge  (1-\beta)n +\eta\bigg\}\bigg|\bigg]\\
\le &\bigg|D\bigg(z, \frac{r_{n,(1-\beta)n}}{2}\bigg)\bigg|^2(1+o(1))n^{c_3}q_n^2 
\sum_{l=(1-\beta)n +\eta }^nn^{-6\eta+7-6(l-n+\beta n-\eta+1)}e^{c_4(l-n+\beta n-\eta+1) \log \log n}\\
\le &o(1)
E\bigg[\bigg|\bigg\{x\in D\bigg(z, \frac{r_{n,(1-\beta) n}}{2}\bigg) :  x\text{ is }(n,\beta)\text{-qualified} \bigg\}\bigg|\bigg]^2,
\end{align*}
and 
 \begin{align*}
&E\bigg[\bigg|\bigg\{(x,y)\in D\bigg(z, \frac{r_{n,(1-\beta)n}}{2}\bigg)^2 :  x,y\text{ are }(n,\beta)\text{-qualified, }l(x,y)\le (1-\beta)n +\eta-1\bigg\}\bigg|\bigg]\\
\le &(1+o(1))\bigg|D\bigg(z, \frac{r_{n,(1-\beta)n}}{2}\bigg)\bigg|^2q_n^2 .
\end{align*}
Then, 
 \begin{align*}
&E\bigg[\bigg|\bigg\{x\in D\bigg(z, \frac{r_{n,(1-\beta)n}}{2}\bigg) :  x\text{ is }(n,\beta)\text{-qualified} \bigg\}\bigg|^2\bigg]\\
\le&
(1+o(1))E\bigg[\bigg|\bigg\{x\in D\bigg(z, \frac{r_{n,(1-\beta)n}}{2}\bigg) :  x\text{ is }(n,\beta)\text{-qualified} \bigg\}\bigg|\bigg]^2
\end{align*}
uniformly in $z\in \mathcal{A}_n$. In addition, Proposition \ref{r1**} implies that, for all sufficiently large $n\in \mathbb{N}$,
 \begin{align*}
E\bigg[\bigg|\bigg\{x\in D\bigg(z, \frac{r_{n,(1-\beta)n}}{2}\bigg) :  x\text{ is }(n,\beta)\text{-qualified} 
\bigg\}\bigg|\bigg]
\ge K_n^{2\beta- 2\alpha(1-\gamma(1-\beta))^2/\beta-\delta}.
\end{align*}
Then, by the Paley--Zygmund inequality, 
we have that, as $n\to \infty$,
 \begin{align*}
P\bigg(\bigg|\bigg\{x\in D\bigg(z, \frac{r_{n,(1-\beta) n}}{2}\bigg) :  x\text{ is }(n,\beta)\text{-qualified} 
\bigg\}\bigg|
\ge K_n^{2\beta- 2\alpha(1-\gamma(1-\beta))^2/\beta-2\delta}\bigg)\to 1.
\end{align*}
Therefore, by Proposition \ref{r1*}, as $n\to \infty$,  
 \begin{align*}
P\bigg(&\bigg|\bigg\{x\in D\bigg(z, \frac{r_{n,(1-\beta) n}}{2}\bigg) :  K(\hat{\mathcal{R}}^z_{(1-\beta)n,\tilde{f}((1-\beta) n)-(1-\beta) n },x)\ge 
\bigg(\frac{4 \alpha}{\pi}-\frac{1}{\log \log n} \bigg)(\log K_n)^2 \bigg\}\bigg|\\
&\ge K_n^{2\beta- 2\alpha(1-\gamma(1-\beta))^2/\beta-2\delta}\bigg)\to 1,
\end{align*}
and hence, 
the desired result holds for all sufficiently large $n\in \mathbb{N}$ with $1/\log_2 n <\delta$.
\end{proof}


Finally, to complete the proof of Lemma \ref{r2}, 
we present the following lemma. 
We say that a point $z \in \mathcal{A}_n$ is $(n,\beta)$-pre-successful  
if $|\hat{N}_{n,k}^z - \tilde{f}(k)|\le k$ 
for all $3\le k\le n-\beta n$. 
\begin{lem}\label{r*2*}
There exists $\delta_n> 0$ with $\lim_{n \to \infty} \delta_n=0$ such that
\begin{align}\label{qe1***}
\overline{q}_n 
:=\inf_{x\in \mathcal{A}_n } 
P(x \text{ is } (n,\beta)\text{-pre-successful})
\ge r_{(1-\beta )n}^{-(2\gamma^2\alpha+\delta_n)}.
\end{align}
Then, there exists some $c>0$ such that, for all $x \neq y \in \mathcal{A}_n$,
\begin{align}\label{qe2***}
P(x \text{ and }y \text{ are } (n,\beta)\text{-pre-successful})
\le c\overline{q}_n^2 r_{l(x,y)}^{2\gamma^2\alpha+\delta_{l(x,y)}}.
\end{align}
\end{lem}
\begin{proof}
As mentioned in \cite{Dembo1},  (\ref{qe1***}) and (\ref{qe2***}) 
are essentially established in \cite[Lemma $3.2$]{rosen}, 
although \cite{rosen} considered the case in which $\beta=0$ and 
used a different $r_{n,k}$ from ours. 
We provide an overview of the proof. 
First, we show (\ref{qe1***}).  
Note that, from (\ref{g2}), for any $1\le k\le n-\beta n-1$ and $x \in \partial D(z, r_{n,k})$,
\begin{align*}
P^x(T_{\partial D(z,r_{n,k+1})}<T_{\partial D(z,r_{n,k-1})})
=\frac{1}{2}+O(n^{-8}).
\end{align*}
Using \cite[Lemma $4.1$]{rosen} 
and the same argument as in the proof of Proposition \ref{r1**},  
\begin{align}\notag
&\inf_{x\in \mathcal{A}_n } 
P(x \text{ is } (n,\beta)\text{-pre-successful})\\
\notag
\ge &(1+o(1))
\sum_{m_k: |m_k-\tilde{f}(k)|\le k}
\prod_{k=3}^{n-\beta n-1}
\begin{pmatrix}
m_{k+1}+m_k-1\\
\notag
m_{k+1}
\end{pmatrix}
\bigg(\frac{1}{2} \bigg)^{m_{k+1}+m_k}
\\
\label{df}
\ge &r_{(1-\beta )n}^{-(2\gamma^2\alpha+\delta_n)},
\end{align}
and hence, we obtain (\ref{qe1***}). 
Next, we prove (\ref{qe2***}). 
As a result of applying Lemma \ref{ghj}, 
\begin{align*}
&P(x \text{ and }y \text{ are } (n,\beta)\text{-pre-successful})\\
\le &\sum_{m_l: |m_l-\tilde{f}(l)|\le l}
P(|\hat{N}_{n,k}^x - \tilde{f}(k)|\le k \text{ for all }  k=3,\ldots,l-3,l,\ldots,n; 
\hat{N}_{n,l}^y = m_l;\\
&|\hat{N}_{n,k}^y - \tilde{f}(k)|\le k  \text{ for all }  k=l+1,\ldots,n-\beta n )\\
\le &CP(|\hat{N}_{n,k}^x - \tilde{f}(k)|\le k  \text{ for all }  k=3,\ldots,l-3,l,\ldots,n)\\
&\sum_{m_l:|m_l-\tilde{f}(l)|\le l}
P(|\hat{N}_{n,k}^y - \tilde{f}(k)|\le k \text{ for all }  k=l+1,\ldots,n-\beta n 
|\hat{N}_{n,l}^y = m_l).
\end{align*}
By the same argument as (\ref{df}) (see also \cite[(4.29)]{rosen}), 
 \begin{align}\label{ree}
 \sum_{m_l: |m_l-\tilde{f}(l)|\le l}
P(|\hat{N}_{n,k}^y - \tilde{f}(k)|\le k \text{ for all }  k=l+1,\ldots,n-\beta n
 |\hat{N}_{n,l}^y = m_l)
\le c\overline{q}_n r_{l(x,y)}^{2\gamma^2\alpha+\delta_{l(x,y)}}.
\end{align}
In addition, by Lemma \ref{ghj}, 
\begin{align*}
&P(|\hat{N}_{n,k}^x - \tilde{f}(k)|\le k  \text{ for all } k=3,\ldots,l-3,l,\ldots,n)\\
\le &P(|\hat{N}_{n,k}^x - \tilde{f}(k)|\le k  \text{ for all }  k=3,\ldots,l-3,l,\ldots,n)\\
\times & \sum_{m_l: |m_l-\tilde{f}(l)|\le l}
P(|\hat{N}_{n,k}^x - \tilde{f}(k)|\le k  \text{ for all }  k=l+1,\ldots,n
|\hat{N}_{n,l}^x = m_l).
\end{align*}
By the same argument as for (\ref{df}) and (\ref{ree}), 
\begin{align*}
P(|\hat{N}_{n,k}^x - \tilde{f}(k)|\le k  \text{ for all }  k=3,\ldots,l-3,l,\ldots,n)
\le c\overline{q}_n r_{l(x,y)}^{\delta_{l(x,y)}}.
\end{align*}
With (\ref{ree}), we have (\ref{qe2***}). 
\end{proof}

\begin{proof}[Proof of Lemma \ref{r2}]
As an $(n,\beta)$-successful point is also an $(n,\beta)$-pre-successful point, 
(\ref{qe2***}) leads to (\ref{qe2}). 
To show (\ref{qe2}), we refer to the proof of  \cite[Lemma $2.1$]{Dembo1} or  \cite[Lemma $10.1$]{Dembo2}. 
Substitute $n$, $1-\beta$, $l(x,y)$, and $2\gamma^2\alpha$ 
for $m$, $\beta$, $k(x,y)$, and $a$ in \cite[Lemma $2.1$]{Dembo1}, respectively, 
and consider our event $H_{(1-\beta) n}^x$ and 
$ \sup_{x\in \mathcal{A}_n} P(H^x_{(1-\beta)n, \tilde{f}((1-\beta)n)-(1-\beta)n})$ in (\ref{ju}) 
instead of $H_{\beta m}^x$ and $\xi_m$ in \cite{Dembo1}. 
Then, using Lemma \ref{r1} instead of  \cite[$(2.13)$]{Dembo1}, 
we have that 
\begin{align}\label{qe1****}
P(x \text{ is } (n,\beta)\text{-successful})
\ge 
(1+o(1))P(x \text{ is } (n,\beta)\text{-pre-successful}).
\end{align} 
uniformly in $x\in \mathcal{A}_n $.
An overview of the proof is as follows. 
By the same argument as in the proof of Lemma \ref{ghj}, 
\begin{align*}
&P(|\hat{N}_{n,k}^x - \tilde{f}(k)|\le k  \text{ for all }  k =3,4,\ldots, n-\beta n)\\
=&
\sum_{m_l:|m_l - \tilde{f}(l)|\le l}
P(|\hat{N}_{n,k}^x - \tilde{f}(k)|\le k \text{ for all }  k =3,4,\ldots, n-\beta n-1;\\
&\quad \quad\quad \quad \quad
\hat{N}_{n,n-\beta n}^x=m_l;
 H^x_{(1-\beta)n, \tilde{f}((1-\beta)n)-(1-\beta)n})\\
=&(1+o(1))(1-\sup_{x\in \mathcal{A}_n} 
P(H^x_{(1-\beta)n, \tilde{f}((1-\beta)n)-(1-\beta)n}))
P(|\hat{N}_{n,k}^x - \tilde{f}(k)|\le k  \text{ for all }  k =3,4,\ldots, n-\beta n)\\
=&
(1+o(1))P(|\hat{N}_{n,k}^x - \tilde{f}(k)|\le k  \text{ for all }  k =3,4,\ldots, n-\beta n).
\end{align*}
Then, from Lemma \ref{r1}, 
\begin{align*}
&P(x \text{ is } (n,\beta)\text{-successful})
\ge 
(1+o(1))P(|\hat{N}_{n,k}^x - \tilde{f}(k)|\le k  \text{ for all }   k =3,4,\ldots, n-\beta n),
\end{align*}
and hence, we obtain (\ref{qe1****}). 
Therefore, with the aid of (\ref{qe1***}) and (\ref{qe1****}), we have (\ref{qe1}). 
\end{proof}

\section*{Acknowledgments. }
The author would like to thank Professor K. Kuwada for suggesting the problem and stimulating discussions and  Professor K. Uchiyama for valuable comments that led to a significant improvement over earlier work. 
In addition,  the author would like to thank H. Murakami, K. Suzuki, and C. Nakamura for checking English.


\end{document}